\documentclass[12pt]{amsart}

\addtolength{\evensidemargin}{-0.1in}
\addtolength{\oddsidemargin}{-0.1in}
\addtolength{\textwidth}{0.2in}

\usepackage{graphicx}

\usepackage{comment}
\usepackage{amsfonts}
\usepackage{amssymb}
\usepackage{amsmath}
\usepackage{latexsym}
\usepackage{amscd}
\usepackage{lscape}
\usepackage{pifont}
\usepackage{fancyhdr}
\usepackage{amsthm}
\usepackage{amsrefs}
\usepackage[all,cmtip]{xy}
\bibliographystyle{amsalpha}

\newtheorem{thm}{Theorem}[section]

\newtheorem{lem}[thm]{Lemma}
\newtheorem{prop}[thm]{Proposition}
{\bf}{\it}
{\bf}{\it}
\newtheorem*{main-theorem}{Main Theorem}{\bf}{\it}
{\bf}{\it}
{\bf}{\it}
{\bf}{\it}
{\bf}{\it}
{\bf}{\it}
{\bf}{\it}
{\bf}{\it}
{\bf}{\it}
{\bf}{\it}
\newtheorem*{thurstonthm}{Thurston's Theorem}{\bf}{\it}

\newenvironment{pf*}[1]{\proof[#1]}{\endproof}
\usepackage{euscript}

\newcommand{\cal}[1]{{\mathcal #1}}

\newcommand{\beq}{\begin{equation}}
\newcommand{\eeq}{\end{equation}}

\newtheorem{defn}{Definition}[section]

\theoremstyle{remark}

\renewcommand{\mod}{\operatorname{mod}}
\newcommand{\tl}{\tilde}

\newcommand{\eps}{\epsilon}




\numberwithin{equation}{section}
\newcommand{\thmref}[1]{Theorem~\ref{#1}}
\newcommand{\propref}[1]{Proposition~\ref{#1}}

\newcommand{\lemref}[1]{Lemma~\ref{#1}}

\newcommand{\figref}[1]{Figure~\ref{#1}}

\newcommand{\cU}{{\cal U}}

\newcommand{\cM}{{\cal M}}

\newcommand{\cF}{{\cal F}}

\newcommand{\cT}{{\cal T}}

\newcommand{\cC}{{\cal C}}

\newcommand{\CC}{{\mathbb C}}

\newcommand{\NN}{{\mathbb N}}
\newcommand{\DD}{{\mathbb D}}
\newcommand{\HH}{{\mathbb H}}

\newcommand{\ignore}[1]{{}}

\setcounter{tocdepth}{1}

\begin{document}

\title[Geometrization of Thurston maps]{Geometrization of postcritically finite branched coverings}
\author{Sylvain Bonnot, Michael Yampolsky}
\date{\today}
\maketitle
\begin{abstract}
We study canonical decompositions of postcritically finite branched coverings of the 2-sphere, as defined by K.~Pilgrim. We show that every hyperbolic cycle in the decomposition does not have a Thurston obstruction. It is thus Thurston equivalent to a rational map.

\medskip
Nous \'etudions les d\'ecompositions canoniques de rev\^etements ramifi\'es de la sph\`{e}re, avec ensembles post-critiques finis, ainsi que K.~Pilgrim les a d\'efinies.
Nous montrons qu'aucun cycle hyperbolique dans la d\'ecomposition n'a d'obstruction de Thurston. Par cons\'equent, un tel cycle est \'equivalent au sens de Thurston \`{a} une application rationnelle.

\medskip
\noindent
MSC 37F20 (primary),37F30
\end{abstract}
\maketitle

\subsection*{Foreword} 
A proof of the main result was announced by Nikita Selinger at the Workshop
``Holomorphic Dynamics around Thurston's Theorem'' which took place at
Roskilde University
from September 27 - October 1, 2010 (a preprint \cite{sel} has soon appeared). At the same conference, we independently
proposed a different approach to the proof, which is presented here. We are very grateful to Nikita for several
 useful discussions, and, particularly, 
for pointing out an error in a previous version of this paper, and helping us to correct it in the proof. Our argument
bears a certain ideological similarity to that of \cite{sel}. However, it is based on a specific geometric surgery
construction, rather than the more abstract concept of augmented Teichm{\"u}ller space used by Nikita. 

\section{Introduction and statement of the result}

\paragraph*{\bf Thurston maps and multicurves}
In this section we recall the basic setting of Thurston's characterization of rational functions.
\label{section1.1}
Let $f:S^{2}\rightarrow S^{2}$ be an orientation-preserving branched covering map of the two-sphere. We define the \textit{postcritical set $P_{f}$} by
\[
 P_{f}:=\bigcup_{n>0}f^{\circ n}(\Omega_{f}),
\]
where $\Omega_{f}$ is the set of critical points of $f$. When the postcritical set $P_{f}$ is finite we say that $f$ is a \textit{Thurston mapping}.

Two Thurston maps $f$ and $g$ are \textit{Thurston equivalent} if there are homeomorphisms $\phi_{0}, \phi_{1}:S^{2}\rightarrow S^{2}$ such that
\begin{enumerate}
 \item the maps $\phi_{0}, \phi_{1}$ coincide on $P_{f}$, send $P_{f}$ to $P_{g}$ and are isotopic \text{rel } $P_{f}$; 
\item the diagram
\[
\begin{CD}
S^{2} @>\phi_{1}>> S^{2}\\
@VVfV @VVgV\\
S^{2} @>\phi_{0}>> S^{2}
\end{CD}
\]
commutes.
\end{enumerate}

Given a Thurston map $f:S^{2} \rightarrow S^{2}$, we define a function $N_{f}:S^{2} \rightarrow \mathbb{N}\cup {\infty}$ as follows:
\[
N_{f}(x)=\begin{cases}
1& \text{if $x \notin P_{f}$},\\
\infty & \text{if $x$ is in a cycle containing a critical point},\\
\underset{f^{k}(y)=x}{\text{lcm}} \text{deg}_{y}(f^{\circ k}) & \text{ otherwise}.
\end{cases}
\]

The pair $(S^{2},N_{f})$ is called the \textit{orbifold of $f$}. 
The {\it signature} of the orbifold $(S^2,N_f)$ is the set $\{N_f(x)\text{ for }x\text{ such that }1<N_f(x)<\infty\}$. 
The \textit{Euler characteristic} of the orbifold is given by 
\[
 \chi(S^{2},N_{f}):= 2-\sum_{x \in P_{f}}\left(1-\frac{1}{N_{f}(x)}\right).
\]
One can prove that $\chi(S^{2},N_{f})\leq 0$. In the case where $\chi(S^{2},N_{f})< 0$, we say that the orbifold is \textit{hyperbolic}. Observe that most orbifolds are hyperbolic: indeed, as soon as the cardinality $\vert P_{f} \vert >4$, the orbifold is hyperbolic.

\medskip
 We recall that a simple closed curve $\gamma \subset S^{2}-P_{f}$ is \textit{non essential} if it bounds a disk, and is \textit{peripheral} if it  bounds a punctured disk. We call a homotopy class of
simple closed curves $[\gamma]$ {\it trivial} if it is either non-essential or peripheral.

\begin{defn}A  \textit{multicurve} $\Gamma$ on $(S^{2},P_{f})$ is a set of disjoint, nonhomotopic, essential, nonperipheral simple closed curves on $S^{2}-P_{f}$.
A multicurve $\Gamma$ is \textit{f-stable} if for every curve $\gamma \in \Gamma$, each component $\alpha$ of $f^{-1}(\gamma)$ is either trivial or homotopic rel $P_{f}$ to an element of $\Gamma$. 

\end{defn}

To any $f$-stable multicurve is associated its \textit{Thurston linear transformation} $f_{\Gamma}:\mathbb{R}^{\Gamma}\rightarrow \mathbb{R}^{\Gamma}$, best described by the following transition matrix
\[
 M_{\gamma \delta}=\sum_{\alpha} \frac{1}{\text{deg}(f:\alpha \rightarrow \delta)}
\]
where the sum is taken over all the components $\alpha$ of $f^{-1}(\delta)$ which are isotopic rel $P_{f}$ to $\gamma$.
Since this matrix has nonnegative entries, it has a leading eigenvalue $\lambda(\Gamma)$ that is real and nonnegative (by the Perron-Frobenius theorem).

We can now state Thurston's theorem:

\begin{thurstonthm} Let $f:S^{2} \rightarrow S^{2}$ be a Thurston map
with hyperbolic orbifold. Then $f$ is Thurston equivalent to a rational function $g$ if and only if $\lambda(\Gamma)<1$ for every $f$-stable multicurve $\Gamma$. The rational function $g$ is unique up to conjugation with an automorphism of $\mathbb{P}^{1}$. 
\end{thurstonthm}

When a stable multicurve $\Gamma$ has a leading eigenvalue $\lambda(\Gamma)\geq 1$, we call it a \textit{Thurston obstruction}.

\paragraph*{\bf Pilgrim's canonical obstructions.}
Below we describe a particular type of Thurston obstructions, which were defined by K.~Pilgrim in \cite{Pil1}. 
Let us assume that a Thurston map $f$ has a hyperbolic orbifold.
Let us denote $\cT_f$ the Teichm{\"u}ller space of the 
punctured sphere $S\equiv S^2\setminus P_f$, and $d_{\cT_f}(\cdot,\cdot)$ the Teichm{\"u}ller distance; $\cM_f$ will denote the moduli
space of $S^2\setminus P_f$; $p_f:\cT_f\to \cM_f$ will be the covering map.
Further, for a choice of the complex structure $\tau$ on $S$, we let
$\rho_\tau$ denote the hyperbolic metric on the Riemann surface $S_\tau\equiv (S,\tau)$, 
$\text{length}_\tau$ the hyperbolic
length, and $d_\tau$ the hyperbolic distance. Similarly, for a general hyperbolic Riemann surface $W$ we denote
$\rho_W$, $d_W$, and $\text{length}_W$ the hyperbolic metric, distance, and length on $W$; $\cT_W$ the Teichm{\"u}ller space, etc.
\begin{defn}
For a non-trivial homotopy class of simple closed curves $[\gamma]$ on $S$ we let $\ell_\tau([\gamma])$ denote the length of the unique geodesic representative of $[\gamma]$ in $S_\tau$. 
\end{defn}
The map $f$ induces an analytic mapping on $\cT_f$:
$$\sigma_f:\cT_f\to\cT_f,\text{ where }\sigma_f([\tau])=[f^*\tau].$$
The map $\sigma_f$ does not increase Teichm{\"u}ller distance. 
Douady and Hubbard \cite{DH} show that the amount by which $\sigma_f$ contracts $d_{\cT_f}$ at a point $[\tau]\in\cT_f$
depends only on $p_f(\tau)$ and a {\it finite} amount of additional
information. More specifically: 
\begin{prop}[Lemma 5.2 of \cite{DH}]
\label{finite cover}
There exists a tower $$\cT_f\overset{\tl p_f}{\longrightarrow}\widetilde \cM_f\overset{\bar p_f}{\longrightarrow}\cM_f$$ of covering spaces,
such that $\bar p_f$ is a finite cover, and a map $\tilde \sigma_f:\widetilde \cM_f\to\cM_f$, such that the diagram below commutes:
\[
\begin{CD}
\cT_f @>{\sigma_f}>> \cT_f    \\
@VV{\tl p_f}V @VV{p_f}V\\
\widetilde\cM_f@>{\tl\sigma_f}>> \cM_f
\end{CD}
\]
The Teichm{\"u}ller norm $||D_{[\tau]}\sigma_f||$ of the differential of $\sigma_f$ depends only on the projection
$\tl p_f([\tau])$. 
\end{prop}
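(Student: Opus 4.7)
The plan is to realize $\widetilde{\cM}_f$ as a quotient $\cT_f/\widetilde G$ for a suitable finite-index subgroup $\widetilde G$ of the mapping class group $G:=\operatorname{Mod}(S^2\setminus P_f)$. Since $\cM_f=\cT_f/G$, any such quotient automatically fits into a tower $\cT_f \overset{\widetilde p_f}{\longrightarrow} \widetilde{\cM}_f \overset{\bar p_f}{\longrightarrow} \cM_f$ of coverings with $\bar p_f$ of degree $[G:\widetilde G]$, and the descent of $\sigma_f$ to a map $\widetilde\sigma_f:\widetilde{\cM}_f\to\cM_f$ amounts precisely to $\widetilde G$-invariance of the composition $p_f\circ\sigma_f$.

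To construct $\widetilde G$, set $P_f':=f^{-1}(P_f)\supseteq P_f$, so that the restriction $f:S^2\setminus P_f'\to S^2\setminus P_f$ is an unramified covering of degree $d:=\deg f$. This cover is classified, up to conjugacy, by a subgroup $K\leq\pi_1(S^2\setminus P_f)$ of index $d$. Since $\pi_1(S^2\setminus P_f)$ is a finitely generated (free) group, the set of its conjugacy classes of index-$d$ subgroups is finite, and $G$ acts on this set via Dehn--Nielsen--Baer. The stabilizer $\widetilde G_0$ of $[K]$ is therefore of finite index in $G$. For $h\in\widetilde G_0$ represented by $\psi:(S^2,P_f)\to(S^2,P_f)$, the condition $\psi_*K=gKg^{-1}$ guarantees a lift $\widetilde\psi:(S^2,P_f')\to(S^2,P_f')$ with $f\circ\widetilde\psi=\psi\circ f$, unique up to composition with deck transformations of $f$. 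By further restricting to those $h\in\widetilde G_0$ whose lift $\widetilde\psi$ can be chosen so that $\widetilde\psi(P_f)=P_f$---a condition cutting out a finite-index subgroup $\widetilde G\leq\widetilde G_0$, since both the deck group and the possible images of $P_f$ inside $P_f'$ are finite---we promote $\widetilde\psi$ to an element of $G$. The direct computation
\[
\sigma_f(h\cdot[\tau]) \;=\; [f^*\psi^*\tau] \;=\; [(\psi f)^*\tau] \;=\; [(f\widetilde\psi)^*\tau] \;=\; [\widetilde\psi^*f^*\tau] \;=\; [\widetilde\psi]\cdot\sigma_f([\tau])
\]
shows that $\sigma_f(h\cdot[\tau])$ and $\sigma_f([\tau])$ lie in the same $G$-orbit, so $p_f\circ\sigma_f$ descends to the desired $\widetilde\sigma_f$.

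The derivative claim will then follow readily. The action of $\widetilde G$ on $\cT_f$ is by Teichm\"uller isometries, so $\widetilde p_f$ is a local Teichm\"uller isometry. For $h\in\widetilde G$, the relation $\sigma_f\circ h=\phi\circ\sigma_f$ (with $\phi\in G$, also acting as an isometry) combined with the chain rule forces $||D_{h\cdot[\tau]}\sigma_f||=||D_{[\tau]}\sigma_f||$, so the function $[\tau]\mapsto||D_{[\tau]}\sigma_f||$ is $\widetilde G$-invariant and descends to a well-defined function on $\widetilde{\cM}_f$, i.e., depends only on $\widetilde p_f([\tau])$.

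I expect the main obstacle to be the careful identification of the subgroup $\widetilde G$: passing from the stabilizer of the conjugacy class $[K]$ in $G$ to an honest lifting group whose lifts preserve the marked set $P_f$ requires bookkeeping of base-point choices and deck transformations in the cover $f:S^2\setminus P_f'\to S^2\setminus P_f$. However, since every auxiliary set (index-$d$ subgroups, deck transformations, embeddings $P_f\hookrightarrow P_f'$) is finite, standard covering-space arguments deliver the required subgroup of finite index.
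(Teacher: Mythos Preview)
The paper does not supply its own proof of this proposition; it is quoted verbatim as Lemma~5.2 of Douady--Hubbard \cite{DH} and used as a black box. There is therefore nothing in the paper to compare your argument against.

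That said, your proposal is essentially the argument given in \cite{DH}: one realizes $\widetilde{\cM}_f$ as $\cT_f/\widetilde G$, where $\widetilde G$ is the subgroup of mapping classes of $(S^2,P_f)$ that lift through the unbranched cover $f:S^2\setminus f^{-1}(P_f)\to S^2\setminus P_f$ to mapping classes of $(S^2,P_f)$. Finiteness of the index comes, as you say, from the finiteness of index-$d$ subgroups of the finitely generated group $\pi_1(S^2\setminus P_f)$ together with the finiteness of the deck group and of the possible images of $P_f$ inside $f^{-1}(P_f)$. The derivative statement then follows from the equivariance $\sigma_f\circ h=\phi\circ\sigma_f$ and the fact that the mapping class group acts by Teichm\"uller isometries. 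Your sketch is correct; the bookkeeping you flag as the ``main obstacle'' is handled cleanly by passing to the group of pairs $(h,\widetilde\psi)$ (an extension of $\widetilde G_0$ by the finite deck group), mapping it to $\operatorname{Sym}(f^{-1}(P_f))$, and pulling back the stabilizer of $P_f$.
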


There exists a rational mapping $R:\hat\CC\to\hat\CC$ which is Thurston equivalent to $f$ if and only if there is 
a fixed point $[\tau_*]=\sigma_f[\tau_*]$. 
In the absence of a Thurston obstruction, since $\sigma^2_f$ is strictly contracting \cite{DH}, 
for any choice of the starting point $[\tau_0]\in\cT_f$, 
the iterates $[\tau_n]\equiv \sigma_f^n([\tau_0])$ converge to $[\tau_*]$ geometrically fast.

\propref{finite cover} and contracting properties of $\sigma_f$ easily imply:
\begin{prop}[Proposition 5.1 of \cite{DH}]
\label{over compact}
The iterates $[\tau_n]\equiv \sigma_f^n([\tau_0])$ converge in $\cT_f$ to $[\tau_*]$ which is a fixed point of $\sigma_f$
if and only if the sequence $\{p_f[\tau_n]\}$ is pre-compact in $\cM_f$.
\end{prop}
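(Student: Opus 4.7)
The forward implication is immediate: if $[\tau_n]\to[\tau_*]$ in $\cT_f$, then continuity of $p_f$ gives $p_f[\tau_n]\to p_f[\tau_*]$, so $\{p_f[\tau_n]\}$ is pre-compact, and continuity of $\sigma_f$ yields $\sigma_f[\tau_*] = \lim_n \sigma_f[\tau_n] = \lim_n [\tau_{n+1}] = [\tau_*]$.

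For the converse, suppose $\{p_f[\tau_n]\}$ is pre-compact in $\cM_f$. The plan is to prove that $\{[\tau_n]\}$ is a Cauchy sequence in $\cT_f$ via a uniform strict contraction for $\sigma_f^2$; completeness of the Teichm\"uller metric on $\cT_f$ will then give convergence, and continuity of $\sigma_f$ will make the limit a fixed point. Since $\bar p_f$ is a finite, and hence proper, covering, $\{\tl p_f[\tau_n]\}$ is pre-compact in $\widetilde\cM_f$; let $\tilde K_0$ denote its closure. Put $a_n := d_{\cT_f}([\tau_n],[\tau_{n+1}])$. Non-expansiveness of $\sigma_f$ in the Teichm\"uller (= Kobayashi) metric gives $a_{n+1}\le a_n$, so $a_n\le a_0$ for every $n$. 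Let $\tilde K\subset\widetilde\cM_f$ be the closed $a_0$-neighborhood of $\tilde K_0$ in the quotient pseudo-metric descended from $d_{\cT_f}$. Because $\cM_f$ is complete and locally compact in the Teichm\"uller metric, Hopf--Rinow makes it a proper metric space, as is its finite cover $\widetilde\cM_f$; hence $\tilde K$ is compact.

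I would next invoke the Douady--Hubbard strict contraction of $\sigma_f^2$ in the hyperbolic orbifold case: $\|D_{[\tau]}\sigma_f^2\|<1$ for every $[\tau]\in\cT_f$. By \propref{finite cover} this descends to a continuous function on $\widetilde\cM_f$, which on the compact set $\tilde K$ is bounded above by some $c<1$. For each $n$, let $\gamma_n$ be the Teichm\"uller geodesic from $[\tau_n]$ to $[\tau_{n+1}]$; it has length $a_n\le a_0$, and its projection $\tl p_f\circ\gamma_n$ starts in $\tilde K_0$ and has length at most $a_0$, so lies entirely in $\tilde K$. Integrating the norm of $D\sigma_f^2$ along $\gamma_n$ therefore yields
$$a_{n+2} \;=\; d_{\cT_f}(\sigma_f^2[\tau_n],\sigma_f^2[\tau_{n+1}]) \;\le\; c\, a_n,$$
so $a_n$ decays geometrically, $\sum_n a_n<\infty$, and $\{[\tau_n]\}$ is Cauchy in $\cT_f$, as desired.

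The key difficulty is the last link of this chain: translating the pointwise strict contraction $\|D_{[\tau]}\sigma_f^2\|<1$ into a uniform bound $c<1$ that applies along the entire Teichm\"uller geodesic between $[\tau_n]$ and $[\tau_{n+1}]$, and not merely at its endpoints. The inflation of $\tilde K_0$ to $\tilde K$ by its $a_0$-neighborhood handles this, and the structural input that keeps the inflated set compact is the properness of the Teichm\"uller metric on the relevant moduli spaces; without such a global compactness statement, the orbit could a priori drift through $\cT_f$ while its projection to $\cM_f$ remains bounded.
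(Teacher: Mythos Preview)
Your argument is correct and is precisely the expansion the paper has in mind when it writes ``\propref{finite cover} and contracting properties of $\sigma_f$ easily imply'' the statement; the paper gives no further proof, deferring to \cite{DH}.

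One small technical point: \propref{finite cover} as stated asserts that $\|D_{[\tau]}\sigma_f\|$ descends to $\widetilde\cM_f$, not $\|D_{[\tau]}\sigma_f^2\|$. Since $\sigma_f^2=\sigma_{f^2}$ (both equal $[\tau]\mapsto[(f^2)^*\tau]$) and $P_{f^2}=P_f$, you may simply apply \propref{finite cover} to $f^2$ to obtain a (possibly different) finite cover $\widetilde\cM_{f^2}\to\cM_f$ on which $\|D\sigma_f^2\|$ is well-defined and continuous; the rest of your argument goes through verbatim with this cover in place of $\widetilde\cM_f$. Also, rather than invoking Hopf--Rinow, you can obtain properness of $\cM_f$ more directly: a Teichm\"uller ball of radius $R$ about $[\tau_0]$ lies in the $e^{-2R}\ell_0$-thick part (Wolpert's inequality, where $\ell_0$ is the systole of $\tau_0$), which is compact by Mumford.
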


Pilgrim showed that a presence of an obstruction implies that the sequence $[\tau_n]$ diverges to infinity in $\cT_f$ in the following specific sense:
\begin{thm}[\cite{Pil1}]
\label{canonical1}
Suppose $f$ is obstructed. Then
\begin{itemize}
\item[(I)] there exists a class $[\gamma]$ such that
$$\ell_{\tau_n}([\gamma])\to 0;$$
\item[(II)] for a non-trivial homotopy 
class $[\gamma]$ the above property is independent of the starting point $[\tau_0]\in\cT_f$;
\item[(III)] the union of all classes $[\gamma]$ as above forms a Thurston obstruction $\Gamma_c$.
\end{itemize}
\end{thm}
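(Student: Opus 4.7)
My strategy combines three ingredients: (a) the non-expansion and contraction properties of $\sigma_f$ recorded in \propref{finite cover} and \propref{over compact}, (b) Mumford's compactness theorem for $\cM_f$, and (c) the pull-back estimate on moduli of annular neighbourhoods of simple closed curves.

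To establish (I), suppose $f$ is obstructed. By \propref{over compact} the projected orbit $\{p_f([\tau_n])\}$ is not pre-compact in $\cM_f$ for any starting $[\tau_0]$. Mumford's compactness theorem for the moduli space of the finitely-punctured sphere $S$ then supplies a subsequence $n_k\to\infty$ along which the shortest closed geodesic of $\tau_{n_k}$ has length tending to $0$. After passing to a further subsequence and using that geodesics below the Margulis constant on a punctured sphere are simple, disjoint and bounded in number by the topology, we fix one class $[\gamma_0]$ with $\ell_{\tau_{n_k}}([\gamma_0])\to 0$. To upgrade this ``$\liminf$'' to the ``$\lim$'' claimed by the theorem, the key tool is the pull-back inequality: for any $[\tau]\in\cT_f$, any non-trivial $[\delta]$ on $S$, and any non-trivial component $[\alpha]$ of $f^{-1}([\delta])$ of degree $d_\alpha=\deg(f|_\alpha)$, an embedded annulus around the $\tau$-geodesic representative of $[\delta]$ lifts via $f$ to an embedded annulus around $[\alpha]$ in $(S,\sigma_f(\tau))$, yielding
\[
\mod_{\sigma_f(\tau)}([\alpha]) \;\geq\; \frac{\mod_\tau([\delta])}{d_\alpha}.
\]
Combined with the Margulis thick-thin description of $\tau_n$, this shows that the set
\[
\Gamma_c := \{[\gamma]\text{ non-trivial} : \ell_{\tau_n}([\gamma])\to 0\}
\]
is non-empty, finite, and closed under taking non-trivial components of $f^{-1}$, so that $[\gamma_0]$ lies (after finitely many pull-backs) in a forward-invariant multicurve on which shortness persists along the full sequence.

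For (II), the non-expansion of $\sigma_f$ gives $d_{\cT_f}(\tau_n,\tau'_n)\leq d_{\cT_f}(\tau_0,\tau'_0)$ for any two starting points, and Wolpert's inequality
\[
\bigl|\log\ell_\tau([\gamma])-\log\ell_{\tau'}([\gamma])\bigr| \;\leq\; 2\,d_{\cT_f}(\tau,\tau')
\]
then ensures that $\ell_{\tau_n}([\gamma])$ and $\ell_{\tau'_n}([\gamma])$ are uniformly multiplicatively comparable, so $\Gamma_c$ is independent of $[\tau_0]$. For (III), the collar lemma applied in $(S,\tau_n)$ for $n$ large forces distinct short geodesics to be disjoint, so $\Gamma_c$ is a multicurve; its $f$-stability follows from the modulus inequality together with finiteness. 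Finally, writing $V_n$ for the vector with entries $\mod_{\tau_n}([\gamma])$, $[\gamma]\in\Gamma_c$, the modulus inequality reads $V_n\geq M V_{n-1}$ component-wise, where $M$ is the Thurston matrix of $\Gamma_c$. Iterating gives $V_n\geq M^n V_0$ with $V_0$ strictly positive, and since at least one entry of $V_n$ tends to $\infty$, Perron-Frobenius forces $\lambda(\Gamma_c)=\lambda(M)\geq 1$.

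\textbf{Main difficulty.} The delicate step is the upgrade from ``$\liminf\ell_{\tau_n}([\gamma])=0$'' to ``$\lim\ell_{\tau_n}([\gamma])=0$'': one has to rule out that a class which becomes very short can later ``recover'' a definite length. The argument must propagate moduli along forward $f$-preimage orbits inside a surface of fixed topology, while using the non-expansion of $\sigma_f$ to bound length fluctuations between consecutive iterates.
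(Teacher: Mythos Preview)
The paper does not supply its own proof of \thmref{canonical1}; the result is quoted from Pilgrim \cite{Pil1} and used as a black box, so there is no in-paper argument to compare against. I therefore assess your plan on its own merits.

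Parts (II) and (III) are essentially sound. Non-expansion of $\sigma_f$ together with Wolpert's length inequality gives (II) at once. For (III), once $\Gamma_c$ is known to be a non-empty multicurve, the closure under non-trivial $f$-preimages follows from the covering estimate you state, and the Perron--Frobenius step is correct. One caveat: the inequality $V_n\geq M\,V_{n-1}$ for the vector of \emph{extremal} moduli is not literally true, because the extremal annuli around distinct $\delta\in\Gamma_c$ need not be disjoint before pull-back. You must work with a disjoint family (collars, say), which yields $V_n\geq M(V_{n-1}-C\mathbf{1})$ for a fixed $C$; this is still enough to force $\lambda(M)\geq 1$ when some entry of $V_n$ is unbounded.

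The genuine gap is in (I), exactly at the point you flag as the ``main difficulty''. From Mumford compactness you extract a class $[\gamma_0]$ with $\liminf_n\ell_{\tau_n}([\gamma_0])=0$, and then assert that the thick--thin description ``shows that $\Gamma_c$ is non-empty''. But $\Gamma_c$ is defined via $\lim=0$, and your outline does not bridge that gap. The modulus inequality you quote propagates shortness from a curve at time $n$ to its \emph{non-trivial preimages} at time $n+1$; it says nothing about $[\gamma_0]$ itself at later times. Your claim that $[\gamma_0]$ ``lies (after finitely many pull-backs) in a forward-invariant multicurve on which shortness persists'' presupposes that the iterated non-trivial preimages of $[\gamma_0]$ form a \emph{finite} collection of \emph{pairwise disjoint} classes. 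Neither finiteness nor disjointness is automatic: iterated preimages of a single curve can in principle wander through infinitely many homotopy classes, and two such preimages, short at \emph{different} times $n_k+a$ and $n_k+b$, are not forced by the collar lemma to be disjoint. The bounded per-step fluctuation coming from $d_{\cT_f}(\tau_n,\tau_{n+1})\leq d_{\cT_f}(\tau_0,\tau_1)$ does not rule out a scenario in which $[\gamma_0]$ repeatedly shortens and then recovers while shortness migrates to a new class. This is precisely where Pilgrim's original argument does substantial work (controlling simultaneously the short curves and the lengths of the remaining curves along the whole orbit), and your plan would need an additional idea of that kind to close the argument.
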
 

\noindent
Pilgrim calls $\Gamma_c$ the {\it canonical} Thurston obstruction. Thus, the existence of an obstruction implies that the canonical obstruction exists (that is $\Gamma_c\neq\emptyset$).

Pilgrim further showed:
\begin{thm}[\cite{Pil1}]
\label{pil-bound}
Let $[\tau_0]\in\cT_f$. There exists a constant $E=E([\tau_0])$ such that for every
non-trivial simple closed curve $\gamma\notin \Gamma_c$ we have
$$\inf \ell_{\sigma^n_f\tau_0}([\gamma])>E.$$

\end{thm}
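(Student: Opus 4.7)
The plan is to argue by contradiction: assuming no such $E$ exists, I would extract a single ``oscillating'' homotopy class $[\gamma]\notin\Gamma_c$ and then, using the non-expansion of $\sigma_f$ together with \propref{finite cover}, force the orbit $\{[\tau_n]\}$ to converge in $\cT_f$ to a fixed point of $\sigma_f$, contradicting the hypothesis that $f$ is obstructed.

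First, suppose the conclusion fails. By the Margulis/collar lemma, the simple closed geodesics on $(S,\tau_n)$ of length below the Margulis constant are pairwise disjoint and hence form a multicurve; since $S^2\setminus P_f$ admits only finitely many multicurves, a diagonal extraction produces a single class $[\gamma]\notin\Gamma_c$ and times $n_k\to\infty$ with $\ell_{\tau_{n_k}}([\gamma])\to 0$. Since $[\gamma]\notin\Gamma_c$, \thmref{canonical1} implies $\ell_{\tau_n}([\gamma])\not\to 0$, so there exist $\eta>0$ and $m_j\to\infty$ with $\ell_{\tau_{m_j}}([\gamma])\geq\eta$. Non-expansion of $\sigma_f$ gives that $d_n:=d_{\cT_f}([\tau_{n+1}],[\tau_n])$ is non-increasing in $n$ and bounded by $D_0:=d_{\cT_f}([\tau_1],[\tau_0])$. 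Wolpert's length-distortion estimate then yields $|\log\ell_{\tau_{n+1}}([\gamma'])-\log\ell_{\tau_n}([\gamma'])|\leq 2d_n$ for every simple closed class $[\gamma']$. Consequently, returning from $\ell_{\tau_{n_k}}([\gamma])\ll\eta$ to $\ell_{\tau_{m_j}}([\gamma])\geq\eta$ for the nearest $m_j>n_k$ requires at least $N_k\geq\log(\eta/\ell_{\tau_{n_k}}([\gamma]))/(2D_0)\to\infty$ iterations; passing to a further subsequence, I would assume the thin multicurve at each intermediate time is a fixed $\Gamma'\supsetneq\Gamma_c$ with $[\gamma]\in\Gamma'$.

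The decisive step uses \propref{finite cover} to convert the pointwise strict contraction $||D\sigma_f^2||<1$ of Douady--Hubbard into a uniform bound. By \propref{finite cover}, $||D\sigma_f||$ descends to a continuous function on $\widetilde\cM_f$. During each long excursion, $\tl p_f[\tau_n]$ stays in a region of $\widetilde\cM_f$ precompact modulo the boundary stratum corresponding to pinching $\Gamma'$; by continuously extending $||D\sigma_f^2||$ to the closure of this stratum and invoking pointwise strict contraction, I would extract a uniform bound $||D\sigma_f^2||\leq\lambda<1$ on the relevant region. Since $d_n$ is globally non-increasing, one sufficiently long excursion drives $d_n$ below any preassigned threshold; iterating across successive excursions of growing lengths $N_k$ compounds the decay to $\sum d_n<\infty$, whence $\{[\tau_n]\}$ is Cauchy in $\cT_f$ and converges to a fixed point of $\sigma_f$. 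This contradicts the hypothesis that $f$ is obstructed. The main obstacle is precisely this last step: one must continuously extend $||D\sigma_f^2||$ to a partial compactification of $\widetilde\cM_f$ corresponding to the pinching of $\Gamma'\supsetneq\Gamma_c$, and verify that strict contraction persists uniformly near that boundary---this is where \propref{finite cover} is indispensable, since without the finite cover the contraction norm is not even well-defined as a function on $\cM_f$.
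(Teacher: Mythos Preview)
The paper does not prove this statement; it is quoted as a result of Pilgrim \cite{Pil1}, so there is no in-paper proof to compare against. Your proposal must therefore be judged on its own merits.

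Your argument has two genuine gaps. First, the extraction of a single class $[\gamma]$ rests on the claim that ``$S^2\setminus P_f$ admits only finitely many multicurves'', which is false for $|P_f|\geq 4$: there are infinitely many homotopy classes of essential simple closed curves, hence infinitely many multicurves. What is finite is the \emph{cardinality} of any single multicurve (at most $|P_f|-3$), but that does not let you pigeonhole across different times $n_k$ to a fixed $[\gamma]$; the short curves at distinct times could all lie in distinct homotopy classes.

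Second, the step you yourself flag as the ``main obstacle'' is not a technicality but a genuine failure point. You need $\|D\sigma_f^2\|\leq\lambda<1$ uniformly on the region where $\Gamma'\supsetneq\Gamma_c$ is short. But $\Gamma_c\subset\Gamma'$ is a Thurston obstruction, and near the stratum where $\Gamma_c$ pinches, $\sigma_f$ is \emph{not} uniformly contracting: in Minsky's product picture the action on the $\HH_i$-factors corresponding to $\gamma_i\in\Gamma_c$ is governed by the Thurston matrix, whose leading eigenvalue is $\geq 1$, so the derivative norm approaches $1$ in those directions as the curves shorten. \propref{finite cover} only says that $\|D\sigma_f\|$ descends to a function on $\widetilde\cM_f$; it says nothing about a uniform bound away from $1$ near the boundary, and indeed no such bound exists along the obstruction directions. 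Consequently the claimed geometric decay of $d_n$ along the long excursions cannot be obtained this way, and the argument does not close.
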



\paragraph*{\bf Pilgrim's decompositions and combinations of Thurston maps}
What follows is a very brief review; the reader is referred to K.~Pilgrim's book \cite{Pil2} for details.
We adhere to the notation of \cite{Pil2}, for ease of reference.

As a motivation, consider that for the canonical Thurston obstruction $\Gamma_c\ni\gamma$, 
there is a choice of complex structure $\tau$
for which $\ell_\tau([\gamma])$ is arbitrarily small, and remains small under pull-back by $f$. It is thus natural
to think of the punctured sphere $S^2\setminus P_f$ as pinching along the homotopy classes $[\gamma]\in\Gamma_c$;
instead of a single sphere we then obtain a collection of spheres, interchanged by a map $f$. 

More specifically,
let $f$ be a Thurston map, and $\Gamma=\cup\gamma_j$ an $f$-stable multicurve. Consider also a finite collection of 
disjoint annuli
$A_{0,j}$ whose core curves are the respective $\gamma_j$. For each $A_{0,j}$ consider only non-trivial preimages; these form
a collection of annuli $A_{1,k}$ each of which is homotopic to one of the curves in $\Gamma$.
Pilgrim says that the pair $(f,\Gamma)$
 is in a {\it standard form} (see Figure \ref{fig-decomp})
if there exists a collection of annuli $A_{0,j}$ as above such that the
following properties hold:
\begin{itemize}
\item[(a)] for each curve $\gamma_j$ the annuli $A_{1,k}$ in the same homotopy class are contained inside $A_{0,j}$;
\item[(b)] moreover, the two outermost annuli $A_{1,k}$ as above share their outer boundary curves with $A_{0,j}$;
\item[(c)] finally, restricted to a boundary curve $\chi$ of $A_{0,j}$, the map $f:\chi\to f(\chi)$ is, up to a homeomorphic
change of coordinates in the domain and the range, given by  $z\mapsto z^d:S^1\to S^1$, for some $d\geq 1$.
\end{itemize}

\begin{figure}[ht]
\includegraphics[width=\textwidth]{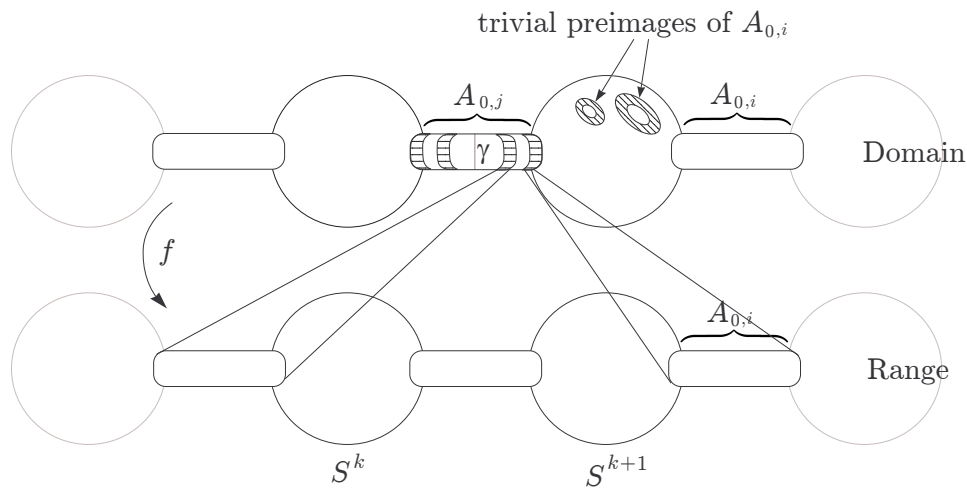}
\caption{\label{fig-decomp}Pilgrim's decomposition of a Thurston mapping}
\end{figure}

A Thurston map with a multicurve 
in a standard form can be decomposed as follows. First, all annuli $A_{0,j}$ are removed, leaving a collection of spheres with holes, denoted $S_0(j)$. For each $j$, there exists a unique connected component $S_1(j)$ of
$f^{-1}(\cup S_0(j))$ which has the property $\partial S_0(j)\subset \partial S_1(j)$. Any such $S_1(j)$ is a sphere with
holes, with boundary curves being of two types: boundaries of removed annuli, or boundaries of trivial preimages of the
removed annuli. 

The holes in $S_0(j)\subset S^2$ can be filled using the property (c) above. Namely, let $\chi$ be a boundary curve of
a component $D$ of $S^2\setminus S_0(j)$. 
Let $k\in\NN$ be the first return $f^k:\chi\to\chi$, if it exists. For each $0\leq i\leq k-1$ the curve 
$\chi_i\equiv f^i(\chi)$ bounds a component $D_i$ of $S^2\setminus S_0(m_i)$ for some $m_i$. Denote $d_i$ the degree
of $f:\chi_i\to \chi_{i+1}$.  Select  homeomorphisms 
$$h_i:\bar D_i\to \bar \DD\text{ so that }h_{i+1}\circ f\circ h_i^{-1}(z)=z^{d_i}.$$
Set $\tl f\equiv f$ on $\cup S_0(j)$.
Define new punctured spheres $\tl S(j)$ by adjoining punctured caps $D_i^*\equiv h_i^{-1}(\bar \DD\setminus \{0\})$
to $S_0(j)$. Extend the map $\tl f$ to each $D^*_i$ by setting 
$$\tl f(z)=h_{i+1}^{-1}\circ (h_i(z))^d.$$
We have thus replaced every hole with a cap with a single puncture.

By construction, the map 
$$\tl f:\cup \tl S(j)\to\cup\tl S(j)$$
contains a finite number of periodic cycles of  punctured spheres. For every periodic cycle of spheres, pick a representative
 $\tl S(j)$, and denote
by $\cF$ the first return map $f^{k_j}:\tl S(j)\to\tl S(j).$ This is again a Thurston map.
The collection of maps $\cF$ and the combinatorial information required to glue the spheres $S_0(j)$ back together 
is what Pilgrim calls a {\it decomposition} of $f$.

Pilgrim shows:
\begin{thm}
For every obstructed Thurston map $f$ there exists an equivalent map $g$, whose canonical obstruction
we denote $\Gamma_c^g$, such that $(g,\Gamma_c^g)$ is in a standard form, and thus can be decomposed.
\end{thm}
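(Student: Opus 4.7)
The plan is to replace $f$ by a Thurston-equivalent representative $g$ for which the canonical multicurve $\Gamma_c$ is realised by very short geodesic loops of an appropriate complex structure on $S^2\setminus P_f$, with all non-trivial preimage annuli sitting nested inside collar annuli of those loops. The essential inputs are \thmref{canonical1} and \thmref{pil-bound}: along the orbit $[\tau_n]:=\sigma_f^n([\tau_0])$ in $\cT_f$ the curves in $\Gamma_c$ are pinched to zero length, while every other essential, non-peripheral simple closed curve has hyperbolic length uniformly bounded below by some $E>0$.

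First I would fix $n$ large enough that each $\gamma_j\in\Gamma_c$ has a geodesic representative $\gamma_j^*$ in $S_{\tau_n}$ of length $\ll E$. By the collar lemma of hyperbolic geometry these geodesics admit pairwise disjoint round collar annuli $A_{0,j}$ of arbitrarily large modulus, and every other simple geodesic of length $\leq E$ is disjoint from all of them. Treating $f$ as a holomorphic covering $(S^2\setminus P_f,\tau_{n+1})\to(S^2\setminus P_f,\tau_n)$, the preimage of each $A_{0,j}$ splits into annular components $A_{1,k}$ whose cores are either trivial in $S^2\setminus P_f$ or homotopic to some $\gamma_m\in\Gamma_c$, by $f$-stability of $\Gamma_c$.

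Next I would show that after an isotopy rel $P_f$ every non-trivial $A_{1,k}$ in the homotopy class of $\gamma_j$ lies inside $A_{0,j}$, and that the outermost such components share their outer boundaries with $\partial A_{0,j}$. The key observation is that any essential subannulus of $S_{\tau_{n+1}}$ in the class of $\gamma_j$ of large modulus must lie in the maximal $\gamma_j$-collar, which coincides with $A_{0,j}$ up to a margin whose modulus tends to $0$ as $n\to\infty$; and since $A_{1,k}$ covers $A_{0,j}$ with degree bounded by $\deg(f)$, its modulus is at least $\mod(A_{0,j})/\deg(f)$, which is large. The uniform length lower bound $E$ for classes outside $\Gamma_c$ forbids these annuli from straying into neighborhoods of any other $\gamma_m$, so a single ambient isotopy rel $P_f$ produces the nesting (a) and the boundary matching (b). To secure (c), on each boundary circle $\chi$ of $A_{0,j}$ the map $f$ is a covering of some degree $d\geq 1$ onto $f(\chi)$, and a further isotopy rel $P_f$ supported in a thin collar of $\chi$ converts $f|_\chi$ into the standard model $z\mapsto z^d$ in suitable collar coordinates. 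Composing all these isotopies yields homeomorphisms $\phi_0,\phi_1$ realising a Thurston equivalence between $f$ and a new Thurston map $g$ with $(g,\Gamma_c^g)$ in standard form.

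The hard part will be the second step: producing a single global isotopy rel $P_f$ that simultaneously nests every non-trivial preimage annulus inside the correct collar, while disentangling the various homotopy classes from one another and from the trivial preimages. The modulus estimates coming from the pinching of $\Gamma_c$ at $\tau_n$, together with the length lower bound from \thmref{pil-bound} for curves outside $\Gamma_c$, are what make such a simultaneous nesting topologically possible; without them, distinct components of $f^{-1}(A_{0,j})$ could in principle be impossible to isotope into $A_{0,j}$ at the same time.
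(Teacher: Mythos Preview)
The paper does not prove this theorem; it is quoted as a result of Pilgrim and attributed to \cite{Pil2}. So there is no ``paper's own proof'' to compare against, only Pilgrim's original argument.

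That said, your approach differs substantially from Pilgrim's, and it is worth pointing out how. Pilgrim's construction is purely topological: for \emph{any} $f$-stable multicurve $\Gamma$ (not just the canonical obstruction), the pair $(f,\Gamma)$ can be put into standard form by an isotopy rel $P_f$. The argument uses only elementary surface topology---disjoint simple closed curves in the same homotopy class cobound annuli, parallel disjoint annuli can always be nested, and covering maps of circles are standard up to homeomorphism. No complex structure, no hyperbolic metric, and no pinching estimate is needed. Your route, by contrast, invokes \thmref{canonical1} and \thmref{pil-bound} to manufacture a hyperbolic structure in which the collars are large and everything else is thick, and then argues that large-modulus preimage annuli must sit inside the collars. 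This works only for $\Gamma_c$ and brings in much heavier machinery than the statement requires.

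More specifically, your identification of the ``hard part'' is off. You write that without the modulus estimates ``distinct components of $f^{-1}(A_{0,j})$ could in principle be impossible to isotope into $A_{0,j}$ at the same time.'' This is not so: the components of $f^{-1}(\cup_j A_{0,j})$ are pairwise disjoint, the non-trivial ones among them fall into the homotopy classes of $\Gamma$ by $f$-stability, and disjoint essential annuli in a surface whose core curves lie in prescribed homotopy classes of a multicurve can always be simultaneously isotoped rel the marked points into prescribed disjoint annular neighborhoods of that multicurve. This is a standard fact about surfaces (innermost-annulus induction) and needs no metric input. So the hyperbolic geometry in your sketch is decoration; the actual content is the topological nesting, which you could carry out directly. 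There is also a small wrinkle you gloss over: the collars $A_{0,j}$ are taken in $(S,\tau_n)$ while the preimages live naturally in $(S,\tau_{n+1})$, and the standard form requires the \emph{same} annuli in domain and range; you would need to say explicitly that the isotopy adjusts the domain side to match.
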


\paragraph*{\bf Statement of the geometrization result.}
It is natural to ask whether the canonical decomposition described above has the maximality property: that is,
whether the restrictions of the return map $\cF$ to spheres $\tl S(j)$, which have topological degrees greater
than one, and a hyperbolic orbifold, are {\it unobstructed}. 
In view of Thurston's theorem, this would imply that for every such $\tl S(j)$ there is a unique, up to a normalization, rational map $R:\hat\CC\to\hat\CC$ which is equivalent to $\cF:\tl S(j)\to\tl S(j)$. 
This constitutes the main conjecture posed by Pilgrim in \cite{Pil2}.

In our main result we prove Pilgrim's conjecture. Thus, Pilgrim's decomposition of an obstructed Thurston mapping canonically breaks it into unobstructed, and thus {\it geometrizable}, pieces.

\begin{main-theorem}
Let $\cF:\cup\tl S(j)\to\cup\tl S(j)$ be given by Pilgrim decomposition of an obstructed Thurston map along the
canonical obstruction. For every $j$ such that $\cF:\tl S(j)\to\tl S(j)$ has a topological degree greater than $1$
and a hyperbolic orbifold, there does not exist a Thurston obstruction in $\tl S(j)$.
\end{main-theorem}

\section{Preliminaries}
\label{section-generalities}

\subsection*{Some notation}
Let $\tl S(j)$ be as in the statement of the main theorem. It is obtained by adding caps $D_i$ to a sphere with
holes $S_1(j)\subset S$. The restriction of the first return map $\cF$ to $\tl S(j)$ is an iterate $\tl f^N$,
so that we have a cycle of punctured spheres
\begin{equation}
\label{cycle-spheres}
S^0\equiv \tl S(j)\overset{\tl f}{\longrightarrow}S^1\overset{\tl f}{\longrightarrow}\cdots\overset{\tl f}{\longrightarrow}S^{N-1}\overset{\tl f}{\longrightarrow}S^0.
\end{equation}

\subsection*{Collar Lemmas}
Let us denote by $s(x)$ the function
$$s(x)=\sinh^{-1}(1/\sinh(x/2)).$$
Note that $s(x)$ decreases from $\infty$ to $0$ as $x$ increases from $0$ to $\infty$.
The {\it collar} around a simple closed hyperbolic geodesic $\gamma$ on a hyperbolic Riemann surface $W$
is the neighborhood
$$\cC(\gamma)\equiv \{z\in W\;|d_W(z,\gamma)<s(\text{length}_W(\gamma))\}.$$
The following is known as  Collar Lemma (cf. \cite{Buser}):

\begin{thm}[{\bf Collar Lemma for closed geodesics}]
The collar $\cC(\gamma)$ is an annulus. Further, if $\gamma$ and $\delta$ are two disjoint simple closed
geodesics on a hyperbolic Riemann surface $W$, then
$$\cC(\gamma)\cap\cC(\delta)=\emptyset.$$
\end{thm}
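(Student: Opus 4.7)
The plan is to reduce both assertions to the same hyperbolic-trigonometric estimate on the distance between two disjoint axes in $\HH$, and then verify that estimate via a classical right-angled hexagon computation.

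First, I would set up the universal cover: write $W = \HH/\Gamma$ for a torsion-free Fuchsian group $\Gamma \subset \mathrm{PSL}(2,\RR)$, and lift $\gamma$ to a complete geodesic $\tilde\gamma \subset \HH$, which is the axis of a primitive hyperbolic element $g \in \Gamma$ with translation length $\ell := \mathrm{length}_W(\gamma)$. The $s(\ell)$-neighborhood $N := N_{s(\ell)}(\tilde\gamma) \subset \HH$ is a $\langle g \rangle$-invariant strip whose quotient $N/\langle g \rangle$ is already an embedded hyperbolic annulus inside $\HH/\langle g \rangle$. The collar $\cC(\gamma)$ is the image of this annulus under the covering $\HH/\langle g \rangle \to W$, so asserting that $\cC(\gamma)$ is an annulus amounts to showing that this covering is injective on $N/\langle g \rangle$, equivalently
\[
d_\HH(\tilde\gamma, h\tilde\gamma) \geq 2 s(\ell) \quad \text{for every } h \in \Gamma \setminus \langle g \rangle.
\]
Simplicity of $\gamma$ forces any two distinct lifts to be disjoint geodesics in $\HH$: they cannot cross (as that would produce a self-intersection of $\gamma$ in $W$), and they cannot share an ideal endpoint, since two hyperbolic elements of a discrete subgroup of $\mathrm{PSL}(2,\RR)$ with a common fixed point on $\partial\HH$ must share both fixed points, hence lie in the common cyclic subgroup $\langle g\rangle$.

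For the second assertion, suppose $q \in \cC(\gamma) \cap \cC(\delta)$. Lifting $q$ to a point $\tilde q \in \HH$ and choosing appropriate lifts $\tilde\gamma, \tilde\delta$, one obtains $d_\HH(\tilde q, \tilde\gamma) < s(\ell_\gamma)$ and $d_\HH(\tilde q, \tilde\delta) < s(\ell_\delta)$, whence $d_\HH(\tilde\gamma, \tilde\delta) < s(\ell_\gamma) + s(\ell_\delta)$. Since $\gamma \cap \delta = \emptyset$ in $W$, the same no-crossing-and-no-shared-endpoint dichotomy forces these lifts to be disjoint. Both claims therefore reduce to the single key estimate: for any two disjoint axes $\tilde\alpha, \tilde\beta$ of hyperbolic elements $g_\alpha, g_\beta \in \Gamma$ with translation lengths $\ell_\alpha, \ell_\beta$,
\[
d_\HH(\tilde\alpha, \tilde\beta) \geq s(\ell_\alpha) + s(\ell_\beta).
\]

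I would establish this estimate by the standard right-angled hexagon argument. Let $D$ denote the length of the common perpendicular of $\tilde\alpha$ and $\tilde\beta$. Together with its translates under $g_\alpha$ and $g_\beta$, this perpendicular cuts out a right-angled hyperbolic hexagon in $\HH$ whose alternating sides lie on $\tilde\alpha$, $\tilde\beta$, and two translated perpendiculars, with side lengths determined by $\ell_\alpha/2$, $\ell_\beta/2$, and $D$. The classical hexagonal identity of hyperbolic trigonometry, combined with the positivity of the hexagon's enclosed area (guaranteed by discreteness of $\Gamma$, which prevents degeneration), yields precisely the sharp bound $D \geq s(\ell_\alpha) + s(\ell_\beta)$. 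The detailed calculation is carried out in \cite{Buser}, Chapter~4.

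\textbf{Main obstacle.} The technical heart of the argument is the hexagonal trigonometric identity, from which the specific function $s(x) = \sinh^{-1}(1/\sinh(x/2))$ emerges. Setting it up cleanly requires careful bookkeeping of the six sides of the hexagon, together with an invocation of discreteness of $\Gamma$ to rule out degenerate configurations---in particular the limiting case where two axes of elements of $\Gamma$ share an ideal endpoint on $\partial\HH$. Once the identity is in hand, both parts of the lemma follow uniformly.
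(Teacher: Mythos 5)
The paper gives no proof of this lemma at all: it is quoted as a classical fact with a pointer to \cite{Buser}. Your reduction supplies the standard argument and is sound: lifting to $\HH$, identifying the stabilizer of $\tilde\gamma$ in the torsion-free group $\Gamma$ with $\langle g\rangle$, noting that simplicity/disjointness downstairs forces distinct lifts to be disjoint (they cannot cross, and a shared ideal endpoint would force a common axis by discreteness), and reducing both assertions of the lemma to the single estimate $d_\HH(\tilde\alpha,\tilde\beta)\ge s(\ell_\alpha)+s(\ell_\beta)$ for disjoint lifts. That estimate is exactly the Keen--Halpern/Buser computation, so in substance you are reconstructing the proof the paper implicitly invokes.

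The one step that would fail as literally written is your description of the hexagon. The translates $g_\alpha\mu$ and $g_\beta\mu$ of the common perpendicular $\mu$ do not cut out a right-angled hexagon with sides on $\tilde\alpha$ and $\tilde\beta$: the perpendicular $g_\alpha\mu$ joins $\tilde\alpha$ to $g_\alpha\tilde\beta$, not to $\tilde\beta$, so the proposed sides do not close up, and its foot on $\tilde\alpha$ sits at distance $\ell_\alpha$ (not $\ell_\alpha/2$) from that of $\mu$. The standard construction instead erects geodesics $L_a\perp\tilde\alpha$ and $L_b\perp\tilde\beta$ at the points at distance $\ell_\alpha/2$ and $\ell_\beta/2$ from the feet of $\mu$; writing $g_\alpha$ and $g_\beta$ as products of reflections in $\mu$ and in these lines shows that $g_\alpha^{\pm1}g_\beta^{\pm1}$ (suitable signs) equals the composition of the reflections in $L_a$ and $L_b$. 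If $L_a$ and $L_b$ intersected, this element would be a nontrivial elliptic, contradicting discreteness plus torsion-freeness---this is where discreteness really enters the quantitative step, not merely in excluding shared ideal endpoints. If $L_a$ and $L_b$ are disjoint (or asymptotic), one obtains a possibly degenerate right-angled hexagon with alternating sides $\ell_\alpha/2$, $D$, $\ell_\beta/2$, and the hexagon identity yields $\cosh D\,\sinh(\ell_\alpha/2)\sinh(\ell_\beta/2)\ \ge\ 1+\cosh(\ell_\alpha/2)\cosh(\ell_\beta/2)$, which is equivalent to $D\ge s(\ell_\alpha)+s(\ell_\beta)$. With this correction your outline coincides with the argument in \cite{Buser} and is complete.
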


\noindent
We also recall  a limiting version of Collar Lemma for cusps (see \cite{McM}).

\begin{lem}[{\bf Collar Lemma for cusps}]
Let us denote $\cC$ the quotient of the region
\[
\{z : \operatorname{Im}(z) >1   \} \subset \mathbb{H}
\]
by the translation $z \mapsto z+2$. 
For every cusp $\kappa$ of a hyperbolic 
surface $X$ there is an isometry $$\iota_\kappa:\cC\to C(\kappa)$$ 
between $\cC$ and  a collar neighborhood $C(\kappa) \subset X$.
Furthermore, the collars about different cusps are disjoint, and $C(\kappa)$ is disjoint from the 
collar $C(\gamma)$ about any simple geodesic $\gamma$ on $X$.

\end{lem}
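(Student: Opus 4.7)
The plan is to work in the universal cover, identify each cusp with a horoball quotient, and use Shimizu's (or J\o rgensen's) inequality to produce the required isometric embedding together with all the disjointness statements.

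Uniformize $X=\HH/\Gamma$ by a Fuchsian group $\Gamma\subset\operatorname{PSL}(2,\RR)$. A cusp $\kappa$ corresponds to a maximal cyclic parabolic subgroup $P_\kappa\subset\Gamma$ fixing a unique point $\xi_\kappa\in\partial\HH$; after conjugating $\Gamma$ by an element of $\operatorname{PSL}(2,\RR)$ one may assume $\xi_\kappa=\infty$ and $P_\kappa=\langle z\mapsto z+2\rangle$, so that the model cusp $\cC$ is the quotient $H_\infty/P_\kappa$ of the horoball $H_\infty=\{\Im z>1\}$. Since $H_\infty$ is $P_\kappa$-invariant, the covering projection $\pi:\HH\to X$ descends to a candidate isometric map $\iota_\kappa:\cC\to X$; it then remains to verify that $\gamma(H_\infty)\cap H_\infty=\emptyset$ for every $\gamma\in\Gamma\sm P_\kappa$. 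The analytic input is Shimizu's lemma: the presence of the translation by $2$ in $\Gamma$ forces the lower-left matrix entry $c=c(\gamma)$ of any non-stabiliser $\gamma\in\Gamma$ to satisfy a sharp lower bound, and together with the elementary identity $\Im(\gamma(z))=\Im(z)/|cz+d|^{2}\leq 1/(c^{2}\Im z)$ this yields $\Im(\gamma(z))\leq 1$ for $\Im z>1$, which is exactly the required disjointness. The metric statement is then automatic because $\pi$ is a local isometry.

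For the remaining disjointness statements I would apply the same idea to the other thin parts of $X$. A second cusp $\kappa'$ corresponds to another maximal parabolic subgroup $P_{\kappa'}\subset\Gamma$ with fixed point $\xi'\in\RR$; Shimizu's lemma applied to the pair of parabolic generators of $P_\kappa$ and $P_{\kappa'}$ bounds the Euclidean diameter of the standard horoball at $\xi'$ from above, keeping it inside $\{\Im z\leq 1\}$ and hence disjoint from $H_\infty$. For a simple closed geodesic $\delta\subset X$ one lifts it to a geodesic $\tl\delta\subset\HH$ fixed by a hyperbolic element $\eta\in\Gamma$; the collar $\cC(\delta)$ lifts to the hyperbolic tubular neighbourhood of $\tl\delta$ of radius $s(\text{length}(\delta))$. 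A J\o rgensen-type inequality applied to the pair $\{z\mapsto z+2,\eta\}$ bounds the maximum height reached by $\tl\delta$ in $\HH$, and combined with the explicit form of the function $s$ this exactly matches the hypothesis $\Im z>1$ defining $H_\infty$, yielding $\cC(\delta)\cap C(\kappa)=\emptyset$.

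The main obstacle I anticipate is not conceptual but bookkeeping: one must organise the three cases (the distinguished cusp, a second cusp, and a closed geodesic) so that the relevant discreteness inequality is applied in each with the sharp constant, and verify that the height cutoff $1$ in the definition of $\cC$ is simultaneously correct for all three disjointness properties. Otherwise one would obtain an embedded model cusp neighbourhood of slightly smaller size, and the statement of the lemma as given would fail in its sharp form.
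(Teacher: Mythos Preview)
Your proposal outlines the standard proof via Shimizu's lemma and J\o rgensen's inequality, and the strategy is correct. However, there is nothing to compare against: the paper does not prove this lemma at all. It is stated as a known result with a reference to McMullen's book \cite{McM} (``We also recall a limiting version of Collar Lemma for cusps (see \cite{McM})''), and used as a black box thereafter. So your write-up supplies a proof where the paper simply cites one.
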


For $\Delta>1$ we denote $\cC_\Delta\subset \cC$ the quotient of $\{z : \text{Im}(z) >\Delta   \} \subset \mathbb{H}$ 
by $z\mapsto z+2$, and call 
$$C_\Delta(\kappa)\equiv \iota_\kappa(\cC_\Delta)$$ the {\it $\Delta$-neighborhood of the cusp $\kappa$.}

\ignore{

\subsection*{Gluing and decomposing the spheres}

Pilgrim's {\it combination} applied to
$$\tl f:\cup\tl S(j)\to\cup\tl S(j)$$
involves connecting the punctured spheres by annuli to reverse the decomposition construction. The end result
is a branched covering which is equivalent to $f:S^2\to S^2$. 
We need to fix some of the geometric 
properties of the decomposition construction, for future use. 
Namely, let $S_1\equiv \tl S(j_1)$ and 
$S_2\equiv \tl S(j_2)$ are two punctured spheres to be combined at the pair of punctures
$p_1\in S_1$ and $p_2\in S_2$.
We will glue in an annulus by removing a small topological disk $D_i\subset S_i$ around each of the punctures
$p_i$, and attaching the boundary curves of the annulus to the boundary circles $\partial D_i$. Since there
are no closed geodesics surrounding a cusp, we will instead use geodesic bigons as described below.
Let $S_1, \ldots , S_N$ be a finite collection of punctured spheres, each one being a 
hyperbolic Riemann surface. For each puncture $p_{j} \in S_1 \sqcup \ldots \sqcup S_N$ we
choose a disk around it $D_{p_j}$ that contains only the puncture $p_j$. 

\begin{defn}[Admissible pairing of caps]
We say that a finite family of pairs $({p_i},{p_j})$ is {\it an admissible pairing of punctures} if
\begin{enumerate}
\item for each pair $({p_i},{p_j})$ the punctures $p_i$ and $p_j$ belong to distinct spheres;
\item the connected sum $S_1 \# \cdots \# S_N$ obtained by gluing along the pairs $({p_i},{p_j})$ is a punctured sphere with at least
three punctures.
\end{enumerate} 
\end{defn}
%


\noindent
We can now describe combining $S_1$ and $S_2$.
Note, that the gluing of two hyperbolic pieces along (piecewise) geodesic boundaries is possible whenever the angles at the corners of the boundaries add up to $2 \pi$ and the lengths of the boundaries agree. We state the following:

\begin{lem}
Let us consider two hyperbolic punctured spheres $S_1, S_2$ each one with a distinguished puncture $p_1 \in S_1$ and $p_2 \in S_2$, as above. 
There exists an $\epsilon_{0}>0$ such that for any $\epsilon \leq \epsilon_0$ and every $\Delta>1$ the following is true:
\begin{enumerate}
\item for an angle 
$\alpha=\alpha(\epsilon,\Delta)$
there exist topological disks $D_1\ni p_1$, $D_2\ni p_2$ in the $\Delta$-neighborhoods of 
$p_1$, $p_2$ respectively,
which are isotopic to the collar neighborhoods of the cusps (relative to the complements of the collar neighborhoods), 
and such that the boundary circle $C_i\equiv\partial D_i$ is 
made of two geodesic arcs meeting at two points with the angle $\alpha$;
\item there exists a hyperbolic annulus $A$ from such that the core curve of $A$ has length $\epsilon$, and  $A$
is bounded by two topological circles $C_1'$ and $C_2'$, each one of which is made of two geodesic arcs meeting at two points at angle $2 \pi - \alpha$, and having the same lengths as the geodesics forming $C_1$ and $C_2$ respectively.
\end{enumerate}

\end{lem}
\begin{proof}
Using Gauss-Bonnet Theorem on topological annuli bounded by piecewise geodesic arcs, one can find arbitrarily thin annuli bounded by pairs of geodesic arcs meeting at an angle arbitrarily close to $\pi$. Using the same theorem, one can find circles made of two geodesic arcs meeting at an angle arbitrarily close to $2 \pi - \pi = \pi$ near a given cusp.

\end{proof}

By applying the above statement repeatedly, we obtain the following lemma.

\begin{lem}
\label{gluing1}
Let $S_1, \ldots , S_{N}$ be a collection of hyperbolic punctured spheres together with an admissible pairing of punctures 
$\{({p_{i_1}},{p_{j_1}}), \ldots  ({p_{i_K}},{p_{j_K}}) \}$. There exists $\epsilon_{0}>0$ so that for any $\epsilon<\epsilon_{0}$ and $\Delta>1$
one can perform a gluing on all the pairs $({p_{i}},{p_{j}})$ by using hyperbolic annuli $A_{p_i,p_j}$ so that:
\begin{enumerate} 
\item the hyperbolic metric on each $S_j$ is isometric to the hyperbolic metric on $S_1\#\cdots\# S_N$;
\item the boundary circles of the annuli are geodesic bigons contained in the $\Delta$-neighborhoods of the cusps;
\item the length of the core curve of each $A_{p_i,p_j}$ is equal to $\epsilon$. 
\end{enumerate}
\end{lem}

}

\subsection*{Teichm{\"u}ller spaces}
Here we describe a key technical tool that we will use, which is due to Y. Minsky (see \cite{Minsky}).

Let $S$ be a surface of finite type, and $\Gamma = \{\gamma_{1}, \ldots, \gamma_{k}\}$ be a system of disjoint, 
homotopically distinct, non trivial simple closed curves on $S$. 
Then we denote by $\text{Thin}_{\epsilon}(S,\Gamma)$ the {\it thin} part of the Teichm{\"u}ller space of $S$ 
given by
$$\text{Thin}_\epsilon(S,\Gamma)=\{[\tau]\in\cT_s\text{ for which }\underset{i}{\sup}\;\ell_\tau([\gamma_i])<\epsilon\}.$$
Denote  $S_{\Gamma}$  the multiply connected surface obtained from $S$ by decomposing along the collection $\gamma_i$
(that is, cutting along $\gamma_i$, and capping every hole by a puncturd disk).
There is a natural homeomorphism 
\[
\Pi_\Gamma: \cT_S \rightarrow \cT_{S_{\Gamma}}\times \mathbb{H}_1 \times \ldots \times \mathbb{H}_k
\]
defined as follows. Firstly, we complete the family $\Gamma$ into a larger family of curves $\hat\Gamma$ decomposing $S$ into pairs of 
pants (possibly degenerate). 
Recall the definition of Fenchen-Nielsen coordinates 
$$(\ell_\tau(\gamma),\text{Twist}_\tau(\gamma))_{\gamma\in\hat\Gamma}\text{ on }\cT_S,$$
given by the homotopy classes of curves in $\hat\Gamma$. 
The projection 
$\cT_S \mapsto \cT_{S_{\Gamma}}$ is obtained by restriction of these coordinates to $\hat\Gamma\setminus\Gamma$.
For each $\gamma_i\in\Gamma$, the map $\Pi_\Gamma$ sends
$$(\ell_\tau([\gamma_i]), \text{Twist}_\tau([\gamma_i]))\longrightarrow
\left( \frac{1}{\ell_\tau([\gamma_i])}, \text{Twist}_\tau([\gamma_i])\right) \in \mathbb{H}_i.$$

Minsky's result gives a distortion bound on this homeomorphism for small enough values of $\epsilon$:
\begin{thm}[\cite{Minsky}]
\label{minsky thm}
Endow the product $\cT_{S_{\Gamma}}\times \mathbb{H}_1 \times \ldots \times \mathbb{H}_k$ with the 
maximum of
the Teichm{\"u}ller distances on
the connected components of $\cT_{S_\gamma}$, and the 
hyperbolic distances on each copy of $\HH$.
Then, for all $\epsilon>0$ sufficiently small,
the  homeomorphism $\Pi_\Gamma$, restricted to $\operatorname{Thin}_{\epsilon}(S, \gamma)$ distorts distances by a bounded additive amount.
\end{thm}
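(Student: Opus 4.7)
The plan is to deduce the product-region estimate from Kerckhoff's formula, which expresses the Teichm\"uller distance as a supremum of extremal-length ratios:
\[
d_{\cT_S}(\tau_1,\tau_2) \;=\; \tfrac{1}{2}\log \sup_{[\gamma]} \frac{\operatorname{Ext}_{\tau_1}([\gamma])}{\operatorname{Ext}_{\tau_2}([\gamma])},
\]
where the supremum runs over nontrivial homotopy classes of simple closed curves on $S$. The strategy is to partition these classes into two families according to whether they intersect $\Gamma$, estimate the extremal-length ratio separately on each family, and check that the resulting suprema match, up to bounded additive error in the logarithm, the corresponding product-metric distances in $\cT_{S_\Gamma} \times \HH_1 \times \cdots \times \HH_k$.

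The technical core is a package of sharp collar-based extremal-length asymptotics. For each short curve $\gamma_i$ of hyperbolic length $\ell_i < \epsilon$, the collar lemma furnishes a maximal embedded annular collar of modulus $\pi/\ell_i + O(1)$, which yields $\operatorname{Ext}_\tau([\gamma_i]) = \ell_i/\pi + o(\ell_i)$; hence the $\HH_i$-coordinate $1/\ell_i$ is equivalent to $(\pi \operatorname{Ext}_\tau([\gamma_i]))^{-1}$. For a curve $\alpha$ crossing $\Gamma$, its extremal length splits into a sum over contributions from the thin collars, each controlled by the geometric intersection number $n_i = i(\alpha,\gamma_i)$ and the twist parameter $t_i$, plus an external contribution coming from $\alpha$'s behavior on the thick part of $S$; the collar contributions are precisely what the hyperbolic metrics on the $\HH_i$-factors see, while the external one matches the extremal length of the corresponding curve on $S_\Gamma$. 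For a curve $\alpha$ disjoint from $\Gamma$, the extremal length on $S$ and on $S_\Gamma$ agree to multiplicative accuracy $1+O(\epsilon)$, since no extremal metric for $\alpha$ can profitably use the thin collars of $\Gamma$.

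Granted these estimates, both directions of the product-region inequality follow by routine bookkeeping. For the lower bound on $d_{\cT_S}$, one plugs test curves into Kerckhoff's formula: each $\gamma_i$ witnesses the change in the $y$-coordinate on $\HH_i$, a dual twist transversal witnesses the $x$-coordinate change, and short curves on $S_\Gamma$-components witness the component Teichm\"uller distances. For the upper bound, one builds a near-extremal quasiconformal deformation factor by factor: a Teichm\"uller representative of the deformation of $(S_\Gamma,\tau'_1) \to (S_\Gamma,\tau'_2)$ on the thick part, together with explicit affine shear-and-stretch maps inside each collar realizing the $\HH_i$-geodesic between the two factor-projections, interpolated across the boundary of each collar with dilatation $1+O(\epsilon)$.

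The main obstacle is producing \emph{additive} rather than merely multiplicative distortion; logarithmic quasi-isometry falls out of crude collar estimates, but additive control requires that the implicit constants in the three asymptotic identities above tend to their exact limiting values as $\epsilon \to 0$. Concretely, one must (a) compute the modulus of a thin collar up to $o(1)$ error, (b) show that the extremal length of a transversal decomposes \emph{additively} over the collars and the thick part with only $O(1)$ error, uniformly in the twist coordinates, and (c) bound the QC interpolation across collar boundaries by a dilatation tending to $1$. Each is a concrete hyperbolic-annulus computation, but together they are what force the three coordinate families to decouple and give $\Pi_\Gamma$ as an additive quasi-isometry on $\operatorname{Thin}_\epsilon(S,\Gamma)$.
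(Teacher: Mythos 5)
This theorem is not proved in the paper at all: it is Minsky's product regions theorem, imported verbatim from \cite{Minsky} and used as a black box, so the only meaningful comparison is with Minsky's own argument. Your outline is essentially a reconstruction of that argument: Kerckhoff's formula $d_{\cT_S}=\tfrac12\log\sup_{[\gamma]}\operatorname{Ext}_{\tau_1}/\operatorname{Ext}_{\tau_2}$, a dichotomy between curves crossing $\Gamma$ and curves disjoint from it, collar-modulus asymptotics $\pi/\ell+O(1)$, and a matching coarse formula for the hyperbolic metric on each $\HH_i$; the upper bound via an explicit factor-by-factor quasiconformal deformation (shearing and stretching inside the thin collars, where large twists are cheap) is a standard variant of the same scheme rather than a different route. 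So the road map is the right one, and it is the cited author's.

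As a proof, however, there is a genuine gap, and your own diagnosis of where the difficulty lies is miscalibrated. The ``package of sharp collar-based extremal-length asymptotics'' that you grant yourself --- in particular the decomposition of $\operatorname{Ext}_\tau(\alpha)$ for a transversal curve into per-collar contributions (uniformly in the twist parameters) plus a thick-part term identified with $\operatorname{Ext}$ on $S_\Gamma$ --- is precisely the technical heart of Minsky's paper; nothing in your sketch indicates how to establish it, and uniformity in the twist is exactly the hard point. Moreover, the precision you demand is both unnecessary and stronger than what is true: since distances on both sides are logarithms of suprema of extremal-length ratios, comparability of extremal lengths up to a \emph{bounded multiplicative factor} already converts into a bounded \emph{additive} error in distance, which is why Minsky's coarse estimates suffice; one does not need implicit constants tending to their exact limits as $\epsilon\to 0$. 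Conversely, your requirement (b) that extremal length decompose additively with only $O(1)$ error is the wrong currency --- for transversal curves the relevant quantities are of size at least $\pi/\epsilon$, so what one can prove and all one needs is multiplicative control, while for the short curves $\gamma_i$ themselves an additive $O(1)$ statement would say nothing at all (there $\operatorname{Ext}\sim\ell_i/\pi=O(\epsilon)$, and you rightly use the sharp Maskit-type asymptotic instead). Finally, matching the $\HH_i$-coordinates of the statement with extremal-length data requires fixing the normalization of the twist coordinate, which your sketch, like the paper's statement, leaves implicit.
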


\subsection*{Decompositions}

To fix the ideas, for the remainder of this paper,
 the decomposition of a Riemann surface $S$ with a canonical multicurve $\Gamma_c$
on the level of Teichm{\"u}ller spaces
will be carried out by restricting the Fenchen-Nielsen projection
\begin{equation}
\label{fnp}
\Pi_\Gamma: \cT_S \rightarrow \cT_{S_{\Gamma_c}}\times \mathbb{H}_1 \times \ldots \times \mathbb{H}_k
\end{equation}
to the first coordinate, by post-composing with the forgetful map
\begin{equation}
\label{frgt}
\cT_{S_{\Gamma_c}}\times \mathbb{H}_1 \times \ldots \times \mathbb{H}_k\longrightarrow\cT_{S_{\Gamma_c}}.
\end{equation}

\ignore{
Geometrically, this is achieved by cutting and capping 
the Riemann surface $S$ {\em preserving the hyperbolic structure. }
Note, that the gluing of two hyperbolic pieces along (piecewise) geodesic boundaries
 is possible whenever the angles at the corners of the boundaries add up to $2 \pi$ and 
the lengths of the boundaries agree. We cannot decompose $S$ along geodesic loops in $\Gamma_c$, as there
are no closed geodesics surrounding a cusp. Instead, we will use {\it geodesic bigons} bounded by geodesic arcs
meeting at the same angles $\alpha$ at the two vertices. An easy application of Gauss-Bonnet Theorem 
together with the Collar Lemmas shows that:
\begin{prop}
\label{caps}
There exists $\delta_0>0$ such that, assuming that
$$\max_{\gamma\in\Gamma}\ell_\tau([\gamma])<\delta_0,$$ 
the surface
$(S,\tau)$ can be decomposed into $\cup\tl S(j)$ in such a way that:
\begin{itemize}
\item  the inclusion of the surface with holes $S_1(j)\subset \tl S(j)$ into $(S,\tau)$
is a hyperbolic isometry;
\item the caps $D_i$ are bounded by geodesic bigons.
\end{itemize}
Furthermore, for every $\eps>0$ there exists $\delta>0$ such that if $$\max_{\gamma\in\Gamma}\ell_\tau([\gamma])<\delta,$$
then the surface can be decomposed so that every cap $D_i\subset S_0(j)\setminus S_1(j)$ has spherical diameter
less than $\eps$ in $(S,\tau)$.
\end{prop}

}

\subsection*{Controlling hyperbolic lengths}
The following facts will help us in bounding the changes to hyperbolic lengths, and complex structures, when 
decomposing Riemann surfaces. First, we
note a standard consequence of the comparison of the hyperbolic metric on a connected 
subdomain $\Omega$
 of $\hat\CC$ with $\frac{1}{d(z,\partial \Omega)}\vert dz \vert$ (see e.g. \cite{Minda}):
\begin{lem}
\label{hypest1}
Let $\Omega$  denote the Riemann sphere $\hat\CC$  with $p$ punctures ($p\geq 3$) with its Poincar\'e metric $\rho_\Omega$,
 and let $\gamma$ be a non peripheral curve. 
For any constant $\eps>0$ there exists $\delta>0$ so that the following holds. 
Let $D_i$ be a collection of disks around the punctures with spherical radii at most $\delta$. Then 
$$\text{length}_{\Omega\setminus(\cup D_i)}(\gamma)<\text{length}_\Omega(\gamma)+\eps.$$
\end{lem}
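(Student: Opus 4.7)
\medskip
\noindent
\textbf{Proof proposal for Lemma~\ref{hypest1}.}
Let $\gamma^*$ denote the unique hyperbolic geodesic representative of $\gamma$ in $\Omega$. The overall strategy is to show, first, that for small enough $\delta$ the geodesic $\gamma^*$ lies inside $\Omega\setminus(\cup D_i)$, and second, that on the portion of $\Omega$ bounded away from the punctures, the hyperbolic metric $\rho_{\Omega\setminus(\cup D_i)}$ is only slightly larger than $\rho_\Omega$. The inequality then follows from the trivial estimate $\text{length}_{\Omega\setminus(\cup D_i)}(\gamma)\leq \text{length}_{\Omega\setminus(\cup D_i)}(\gamma^*)$ combined with the two points above.

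The first step uses the Collar Lemma for cusps: there is a $\Delta>1$ such that the cusp neighborhoods $C_\Delta(\kappa_i)$ around the punctures $\kappa_i$ of $\Omega$ are pairwise disjoint and disjoint from the collar around $\gamma^*$. Since $\gamma$ is non-peripheral, $\gamma^*$ is thus separated from each cusp. I then choose $\delta_1>0$ so small that any spherical disk of radius at most $\delta_1$ around a puncture $\kappa_i$ is contained in $C_\Delta(\kappa_i)$; with this choice $\gamma^*\subset \Omega\setminus(\cup D_i)$ whenever all $D_i$ have spherical radius $<\delta_1$.

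The second step is the analytic core of the argument. Fix a compact neighborhood $K$ of $\gamma^*$ contained in the complement of the $C_\Delta(\kappa_i)$. By the comparison $\rho_\Omega(z)|dz|\asymp |dz|/d(z,\partial\Omega)$ (valid away from the punctures, as in \cite{Minda}), together with the corresponding lower bound for $\rho_{\Omega\setminus(\cup D_i)}$, the ratio $\rho_{\Omega\setminus(\cup D_i)}/\rho_\Omega$ is controlled on $K$ by the ratio of the distance functions to $\partial\Omega$ and to $\partial\Omega\cup\bigcup\partial D_i$. Shrinking $\delta\leq \delta_1$ forces $d(z,\partial\Omega\cup\bigcup \partial D_i)$ to agree with $d(z,\partial\Omega)$ on $K$ (since the nearest boundary point, from $K$, remains the original puncture). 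Alternatively, one can invoke Carath\'eodory kernel convergence: as $\delta\to 0$, the domains $\Omega\setminus(\cup D_i)$ converge to $\Omega$ and their hyperbolic metrics converge uniformly on compacta.

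Combining the two ingredients, given $\eps>0$, I choose $\delta\leq\delta_1$ so small that
\[
\rho_{\Omega\setminus(\cup D_i)}(z)\leq \rho_\Omega(z)+\frac{\eps}{\text{length}_\Omega(\gamma^*)+1}\cdot \rho_\Omega(z)\cdot \frac{1}{C}\qquad \text{for all }z\in K,
\]
which yields $\text{length}_{\Omega\setminus(\cup D_i)}(\gamma^*)<\text{length}_\Omega(\gamma^*)+\eps$, and hence the lemma. I expect the main technical point to be the uniform estimate on $K$; the comparison with $1/d(z,\partial\Omega)$ is quantitatively delicate near punctures, but since we have deliberately excised cusp neighborhoods this is handled by standard estimates. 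One could also bypass the distance comparison entirely and conclude via kernel convergence, treating a single explicit model (e.g.\ a once-punctured disk) to obtain the quantitative dependence on $\delta$.
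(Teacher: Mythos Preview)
Your proposal is correct and follows exactly the approach the paper indicates: the paper does not give a detailed proof but simply records the lemma as ``a standard consequence of the comparison of the hyperbolic metric \ldots\ with $\frac{1}{d(z,\partial \Omega)}|dz|$'' (citing \cite{Minda}), which is precisely the analytic core of your second step. Your use of the Collar Lemma for cusps to keep $\gamma^*$ away from the disks, together with the kernel-convergence alternative, fills in exactly the details the paper leaves implicit.
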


\noindent
Furthermore, we have:
\begin{lem}
\label{hypest3}
For every $\eps>0$ there exists $M>0$ such that the following holds. Suppose $S$ is the Riemann sphere with $p\geq 3$ punctures,
$A\subset S$ is an annulus such that $\mod(A)>M$, and $S_1$ and $S_2$ are the two connected components of $S\setminus A$.
 Then for every pair of points 
$a,b$ in $S_1$ we have
$$d_{S\setminus S_2}(a,b)<d_S(a,b)+\eps.$$

\end{lem}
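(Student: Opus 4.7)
The inequality $d_{S\setminus S_2}\ge d_S$ is immediate from Schwarz-Pick applied to $S\setminus S_2\hookrightarrow S$, so the content of the lemma is the reverse bound up to an additive $\eps$. The heuristic is that when $\mod(A)$ is large, the closed geodesic $\gamma$ in the core homotopy class of $A$ is very short in $S$ (length at most $\pi/\mod(A)$ by the modulus-length inequality), its hyperbolic collar is very wide, and the truncation of $S_2$ sits on the far side of this long tube, so it perturbs the hyperbolic geometry as seen from $S_1$ by an arbitrarily small amount.

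I would argue by contradiction. Suppose there exist $\eps_0>0$ and sequences $(S_n,A_n,a_n,b_n)$ with $p$ punctures, $a_n,b_n\in S_{1,n}$, $\mod(A_n)\to\infty$, and $d_{S_n\setminus S_{2,n}}(a_n,b_n)\ge d_{S_n}(a_n,b_n)+\eps_0$. After passing to a subsequence we fix the topological type of $(S,A,S_1,S_2)$, there being only finitely many possibilities. Since $\ell_{S_n}(\gamma_n)\le \pi/\mod(A_n)\to 0$ for the geodesic $\gamma_n$ in the core class of $A_n$, the $S_n$ form a pinching family; after normalization and further extraction, they converge in the Deligne-Mumford sense to a nodal surface $\Sigma_1\vee\Sigma_2$, with $a_n,b_n$ on the $\Sigma_1$ side.

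The main analytic input is the convergence of both families of hyperbolic metrics to the same limit:
\[
\rho_{S_n}\longrightarrow \rho_{\Sigma_1},\qquad \rho_{S_n\setminus S_{2,n}}\longrightarrow \rho_{\Sigma_1},
\]
uniformly on compact subsets of $\Sigma_1$ away from the node cusp. The first convergence is the standard pinching theorem (Wolpert); the second holds because $S_n\setminus S_{2,n}=S_{1,n}\cup A_n$ has an annular end $A_n$ of diverging modulus, which likewise degenerates to a cusp and yields the same limit $\Sigma_1$. If $a_n,b_n$ stay in a compact subset of $\Sigma_1$ away from its cusps, one passes to subsequential limits $a_n\to a_\infty,\ b_n\to b_\infty\in\Sigma_1$ and concludes that $d_{S_n}(a_n,b_n)$ and $d_{S_n\setminus S_{2,n}}(a_n,b_n)$ both tend to $d_{\Sigma_1}(a_\infty,b_\infty)$, contradicting the standing hypothesis.

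The main obstacle is the case where $a_n$ or $b_n$ escapes to a cusp of $\Sigma_1$, most delicately to the node cusp, i.e., when the points slip deep into the collar of $\gamma_n$. To handle this, I would fix $\Delta>1$ large and use the Collar Lemma for cusps to identify the $\Delta$-neighborhoods of each cusp in $S_n$, in $S_n\setminus S_{2,n}$, and in $\Sigma_1$ with the same model cylinder $\cC_\Delta$, up to isometric errors of size $o(1)$ as $n\to\infty$. A minimizing path from $a_n$ to $b_n$ then splits into pieces inside such cusp neighborhoods—where both metrics agree with the cusp model up to $o(1)$, so their contributions to distance coincide modulo $o(1)$—and pieces in the thick region, handled by the previous case. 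Taking $\Delta$ large and then $n$ large absorbs the total error into $\eps_0$ and produces the desired contradiction.
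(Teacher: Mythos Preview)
The paper does not prove this lemma; it is stated without argument among the preliminaries, so there is no proof to compare against. Your overall strategy is reasonable, but as written it has two real gaps.

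First, the reduction to ``finitely many topological types'' is not justified: in the lemma $M$ is chosen before $p$, so along a contradicting sequence the number of punctures $p_n$ may be unbounded, and there are infinitely many possible types $(S,A,S_1,S_2)$. (In the paper's sole use of the lemma the underlying punctured sphere is fixed, so this is harmless for the application, but it means you are not proving the lemma as stated.)

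Second, and more seriously even for fixed $p$: nothing in the hypotheses prevents other simple closed geodesics on $S_n$, disjoint from $\gamma_n$, from becoming short. Hence the Deligne--Mumford limit is not forced to be a two-component curve $\Sigma_1\vee\Sigma_2$; the piece $\Sigma_1$ may itself be nodal. In that situation $a_n,b_n$ can sit in the thick parts of \emph{distinct} smooth components of $\Sigma_1$, so neither your ``compact subset'' case nor your ``escaping to a cusp'' case applies, while $d_{S_n}(a_n,b_n)\to\infty$. The additive bound you are after is precisely the assertion that the \emph{difference} of these two diverging distances stays below $\eps$, and your sketch does not address this. To repair the argument along the same lines you must track \emph{all} pinching curves on $S_n$, not only $\gamma_n$, decompose the $S_n$-geodesic from $a_n$ to $b_n$ into thick pieces and collar crossings, and show that on each collar the two metrics $\rho_{S_n}$ and $\rho_{S_n\setminus S_{2,n}}$ agree up to an error that is summable over all collars, uniformly in $n$. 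That is essentially a quantitative, non-limiting version of your last paragraph, and it is where the actual work lies.
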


Finally, we state (cf. \cite{Hai}):

\begin{lem}
\label{haiss}
Let $X$ be a finite type hyperbolic surface and denote $\sigma_0$ the standard complex structure in $X$.
\begin{itemize}
\item [(I)] Let $x\in X$, and $\cU\subset \cT_X$ any neighborhood of $[\sigma_0]$ in the Teichm{\"u}ller space of $X$.
Denote $B_r(x)$ the hyperbolic ball with radius $r>0$ around $X$. 
There exists $r_0>0$ such that for any complex structure $\sigma$ on $X$ with $\sigma\equiv \sigma_0$ outside $B_r(x)$
we have $[\sigma]\in \cU$.
\item [(II)] Let $\kappa$ be a cusp in $X$, and $\cU\subset \cT_X$ any neighborhood of $[\sigma_0]$ in the Teichm{\"u}ller space of $X$.
There exists a cusp neighborhood $C_\Delta (\kappa)$ such that
for any complex structure $\sigma$ on $X$ with $\sigma\equiv \sigma_0$ outside $C_\Delta(\kappa)$
we have $[\sigma]\in \cU$.
\end{itemize}
\end{lem}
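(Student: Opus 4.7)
The plan for both parts is identical: construct a quasiconformal homeomorphism $\Phi\colon(X,\sigma_0)\to(X,\sigma)$, isotopic to the identity rel cusps, whose maximal dilatation $K(\Phi)$ tends to $1$ as the small region (the ball $B_r(x)$ in (I), the cusp neighborhood $C_\Delta(\kappa)$ in (II)) shrinks. Since the Teichm\"uller distance satisfies $d_{\cT_X}([\sigma_0],[\sigma])\le \tfrac{1}{2}\log K(\Phi)$, this places $[\sigma]$ inside any prescribed neighborhood $\cU$ of $[\sigma_0]$.

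For Part (I), I would first choose $r_*>0$ so that $\overline{B_{r_*}(x)}\subset X$ is an embedded topological disk, disjoint from the cusps and contained in a single $\sigma_0$-chart. For $r<r_*$ and $\sigma$ agreeing with $\sigma_0$ outside $B_r(x)$, the annulus $A(r):=B_{r_*}(x)\setminus\overline{B_r(x)}$ inherits a single conformal structure, of modulus $m(r)$ going to $\infty$ as $r\to 0$. Uniformize $\psi_0\colon(B_{r_*}(x),\sigma_0)\overset{\sim}{\to}\DD$ and $\psi\colon(B_{r_*}(x),\sigma)\overset{\sim}{\to}\DD$, normalizing each so that the ``inner disks'' $\psi_*(\overline{B_r(x)})$ are centered near $0$; by the Gr\"otzsch inequality their inner boundaries lie in a Euclidean disk of radius $O(e^{-\pi m(r)})$ about $0$. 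View the set-theoretic identity in these coordinates as $h:=\psi\circ\operatorname{id}\circ\psi_0^{-1}\colon\DD\to\DD$, which is conformal on $\psi_0(A(r))$ and fails to be conformal only on the inner image of $B_r(x)$. Replace $h$ by a QC map $\tilde h$ that agrees with $h$ on an outer sub-annulus of $\psi_0(A(r))$ of modulus $\tfrac{1}{2}m(r)$, is an explicit conformal diffeomorphism on the inner Euclidean disk of radius $O(e^{-\pi m(r)/2})$, and QC-interpolates on the intermediate collar. A standard Gr\"otzsch/extremal-length estimate gives dilatation $1+O(e^{-cm(r)})$ for some absolute $c>0$. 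Transporting $\tilde h$ back through $\psi_0,\psi$ and extending by the identity outside $B_{r_*}(x)$ yields the desired $\Phi$, which differs from the identity only on a topological disk disjoint from the cusps and is hence isotopic to the identity rel cusps by the Alexander trick.

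For Part (II), the argument is parallel, with $B_r(x)$ replaced by $C_\Delta(\kappa)$ and $B_{r_*}(x)$ by a fixed larger cusp neighborhood $C_{\Delta_*}(\kappa)$ for some $\Delta_*<\Delta$. In the cusp model of the Collar Lemma for cusps the collar annulus $C_{\Delta_*}(\kappa)\setminus C_\Delta(\kappa)$ has explicit modulus $\tfrac{1}{2}(\Delta-\Delta_*)$, which tends to infinity as $\Delta\to\infty$. The pieces $(C_{\Delta_*}(\kappa),\sigma_0)$ and $(C_{\Delta_*}(\kappa),\sigma)$ are once-punctured disks; after filling the puncture, the uniformize-and-interpolate construction of Part (I) carries over verbatim, using the punctured-disk form of the Gr\"otzsch estimate. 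A boundary- and puncture-fixing self-homeomorphism of a once-punctured disk is isotopic to the identity rel the puncture, so the isotopy rel cusps is again automatic.

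The main obstacle I anticipate is verifying that the QC interpolation on the intermediate collar has dilatation $1+O(e^{-cm})$ \emph{uniformly} in the complex structure $\sigma$ on the inner region. The Gr\"otzsch inequality controls the position of the inner boundary in the uniformized picture, but translating this into a modulus-only dilatation bound requires that the boundary identification induced by $\sigma$ be uniformly quasisymmetric as $m\to\infty$. This is a standard extremal-length consequence of the Gr\"otzsch estimate, but it is the one geometric input that must be checked uniformly in $\sigma$.
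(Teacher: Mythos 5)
The paper does not actually prove \lemref{haiss}: it is quoted from Ha\"{\i}ssinsky \cite{Hai} (``Finally, we state (cf.~\cite{Hai})''), so there is no internal proof to compare yours against. Your construction --- uniformize the enclosing disk (respectively the filled-in cusp neighborhood) for both structures, use the Gr\"otzsch inequality to locate the deformed region as a tiny hole near $0$, keep $h=\psi\circ\psi_0^{-1}$ on an outer collar so that the glued map is the identity outside the disk, and interpolate across a large-modulus round annulus --- is the standard route to this statement and is in the spirit of the cited note. Granting the interpolation estimate, it produces a quasiconformal map $(X,\sigma_0)\to(X,\sigma)$ which is the identity off a disk (hence isotopic to the identity rel the punctures) with dilatation $1+o(1)$ depending only on $m(r)$ (resp.\ on $\Delta-\Delta_*$), so $d_{\cT_X}([\sigma_0],[\sigma])\to 0$ uniformly in $\sigma$, which is exactly what the lemma asserts.

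Two caveats. First, the key input you flag at the end is stated too weakly: knowing only that the boundary identification induced by $\sigma$ is \emph{uniformly quasisymmetric} would give a dilatation bounded away from infinity, hence only a bounded displacement in $\cT_X$, not containment in an arbitrary neighborhood $\cU$. What you need --- and what is true --- is quantitative rigidity: $h$ is a conformal map of a round annulus of modulus $m\to\infty$ onto a subannulus of $\DD$ with outer boundary $\partial\DD$ and hole near $0$, and therefore, after normalization, it is close to a rotation on the middle subannulus with error controlled by $m$ alone (extend $h$ by reflection across $\partial\DD$ and apply Koebe distortion, or a normal-families argument, on the doubled annulus); interpolating between $h$ and that rotation across an annulus of large modulus then gives dilatation $1+o(1)$. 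Since $\sigma$ enters the construction only through $h$, and this estimate depends only on the modulus of the collar on which $\sigma\equiv\sigma_0$, the uniformity in $\sigma$ you worry about is automatic. Second, in part (II) your ``once-punctured disk'' step tacitly assumes that the end at $\kappa$ remains a puncture for $\sigma$; this is implicit in the statement (otherwise $[\sigma]$ would not define a point of $\cT_X$ at all), but it deserves to be said, since an arbitrary complex structure on the cusp neighborhood could a priori turn that end into one of finite modulus.
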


\subsection*{Combinations of punctured surfaces}
Let us briefly describe the inverse of the above decomposition procedure, for future reference. Pilgrim's {\it combination} applied to
$\cup\tl S(j)$
involves connecting the punctured spheres by annuli to reverse the decomposition construction. The end result
is a branched covering which is equivalent to $f:S^2\to S^2$. 
Let $S_1\equiv \tl S(j_1)$ and 
$S_2\equiv \tl S(j_2)$ be two punctured spheres to be combined at the pair of punctures
$p_1\in S_1$ and $p_2\in S_2$.
We will glue in an annulus by removing a small topological disk $D_i\subset S_i$ around each of the punctures
$p_i$, and attaching the boundary curves of the annulus to the boundary circles $\partial D_i$. 
We again would like to perform the gluing in a hyperbolically rigid fashion.

Let $W_1, \ldots , W_N$ be a finite collection of punctured spheres, each one being a 
hyperbolic Riemann surface. For each puncture $p_{j} \in W_1 \sqcup \ldots \sqcup W_N$ we
choose a disk around it $D_{p_j}$ that contains only the puncture $p_j$. 

\begin{defn}[Admissible pairing of caps]
We say that a finite family of pairs $({p_i},{p_j})$ is {\it an admissible pairing of punctures} if
\begin{enumerate}
\item for each pair $({p_i},{p_j})$ the punctures $p_i$ and $p_j$ belong to distinct spheres;
\item the connected sum $W_1 \# \cdots \# W_N$ obtained by gluing along the pairs $({p_i},{p_j})$ is a punctured sphere with at least
three punctures.
\end{enumerate} 
\end{defn}

\noindent
We can now describe combining $S_1$ and $S_2$, using \lemref{hypest3}:
\begin{lem}
\label{compn}
Let us consider two hyperbolic punctured spheres $S_1, S_2$ each one with a distinguished puncture $p_1 \in S_1$ and $p_2 \in S_2$. For every $\eps>0$ there exists $\Delta>1$ such that the following holds.
Let $D_1\ni p_1$, $D_2\ni p_2$ be two topological disks  in the respective $\Delta$-neighborhoods of 
$p_1$, $p_2$ respectively. 
Let $S$ be a combination of the surfaces $S_1'\equiv S_1\setminus D_1$ and $S_2'\equiv S_2\setminus D_2$. That is,
there exist conformal embeddings 
$$\chi_i:S_i'\to S$$
with disjoint images, such that
$$A\equiv S\setminus (\chi_1(S_1')\cup \chi_2(S_2'))$$
is an annulus with a core curve $\gamma$. 
Denote $\hat S_1$, $\hat S_2$ the decomposition of $S$ along $\Gamma=\{\gamma\}$.
Then:
\begin{itemize}
\item $\text{length}_S(\gamma)<\eps,$ and 
\item $d_{\cT_{S_i}}(\hat S_i,S_i)<\eps.$

\end{itemize}

\end{lem}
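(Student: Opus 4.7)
The plan is to prove the two bullets separately and take the larger of the resulting thresholds for $\Delta$.

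For the length bound, the strategy is to produce an embedded annulus in $S$ homotopic to $\gamma$ whose modulus grows linearly in $\Delta$. In the standard cusp parametrization the concentric shell $C_{\Delta/2}(p_i)\setminus C_\Delta(p_i)\subset S_i$ has modulus $\Delta/4$; since $D_i\subset C_\Delta(p_i)$, this shell is contained in $C_{\Delta/2}(p_i)\setminus D_i\subset S_i'$, and both are annuli in the same homotopy class, so monotonicity of modulus gives $\mod(C_{\Delta/2}(p_i)\setminus D_i)\geq \Delta/4$. The conformal embeddings $\chi_i$ transport these annuli into $S$, where they abut $A$ along its two boundary circles. The concatenation $B=\chi_1(C_{\Delta/2}(p_1)\setminus D_1)\cup A\cup \chi_2(C_{\Delta/2}(p_2)\setminus D_2)$ is then a topological annulus in $S$ homotopic to $\gamma$ with $\mod(B)\geq \Delta/2$, by additivity of modulus under telescoping gluing. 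The standard comparison between the modulus of an embedded annulus and the hyperbolic length of its core curve then yields the first bullet for $\Delta$ chosen large enough in terms of $\epsilon$.

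For the Teichm\"uller bound, fix $i=1$: by construction $\hat S_1$ is obtained from $\chi_1(S_1')$ by attaching a topological disk along $\chi_1(\partial D_1)$ and marking its center as a puncture. I would pick any homeomorphism $\Phi:S_1\to\hat S_1$ which equals $\chi_1$ on $S_1\setminus D_1$ and sends $p_1$ to the new puncture. Since $\chi_1$ is conformal, the pullback $\Phi^*\sigma_{\hat S_1}$ is a complex structure on $S_1$ that agrees with the base structure $\sigma_0$ outside $D_1\subset C_\Delta(p_1)$. Lemma \ref{haiss}(II), applied to the $\epsilon$-ball around $[\sigma_0]\in\cT_{S_1}$, then furnishes a threshold beyond which $d_{\cT_{S_1}}(\hat S_1,S_1)<\epsilon$; the identical argument handles $\hat S_2$.

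The main subtlety is in the length bound, where $D_i$ is allowed to be any topological disk inside $C_\Delta(p_i)$, not necessarily a concentric cusp shell. Monotonicity of modulus sidesteps this by letting us replace $D_i$ by the canonical shell $C_\Delta(p_i)$ for the purpose of a lower bound on modulus, giving uniform control in $\Delta$ independent of the shape of $D_i$. Once the two concentric shells are available inside $S$ via $\chi_i$, the Teichm\"uller estimate reduces cleanly to Lemma \ref{haiss}(II).
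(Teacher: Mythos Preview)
Your argument for the first bullet is correct and is essentially the intended one: build a fat annulus around $\gamma$ from the cusp shells and invoke the standard modulus--length inequality.

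For the second bullet there is a genuine gap. You treat $\hat S_1$ as if it were literally the Riemann surface obtained by capping $\chi_1(S_1')$ along $\chi_1(\partial D_1)$, so that its complex structure coincides with that of $S_1$ on all of $S_1\setminus D_1$. But in this paper the decomposition is \emph{defined} via the Fenchel--Nielsen projection (see the paragraph ``Decompositions''): $\hat S_1$ is the point of $\cT_{S_1}$ whose Fenchel--Nielsen length and twist parameters (for the curves of $\hat\Gamma\setminus\{\gamma\}$ lying on the $S_1$ side) are those measured in the hyperbolic metric of $S$, with the boundary $\gamma$ replaced by a cusp. Concretely, only the union $Q$ of pairs of pants \emph{not} adjacent to $\gamma$ is carried over isometrically from $S$ to $\hat S_1$; the pair of pants $P$ bordering $\gamma$ is replaced by a different hyperbolic pair of pants $P'$ (same lengths on $\alpha,\beta$, cusp in place of $\gamma$). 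Since $P$ overlaps $\chi_1(S_1')$ in a region whose $\chi_1^{-1}$--image is a fixed pair of pants about $p_1$ in $S_1$ (not contained in any $C_\Delta(p_1)$ however large $\Delta$ is), your pulled-back structure $\Phi^*\sigma_{\hat S_1}$ does \emph{not} agree with $\sigma_0$ outside $C_\Delta(p_1)$, and Lemma~\ref{haiss}(II) does not apply.

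What is actually needed is to compare the Fenchel--Nielsen data: one must show that for every curve $\alpha$ on the $S_1$ side, the length $\ell_S(\alpha)$ and twist measured in $S$ are $\eps$--close to $\ell_{S_1}(\alpha)$ and the twist in $S_1$. This is exactly what Lemma~\ref{hypest3} (together with the large modulus from your first step and Lemma~\ref{hypest1}) delivers, and it is the lemma the paper invokes. Your construction does prove that \emph{some} capping of $\chi_1(S_1')$ is $\eps$--close to $S_1$; the missing step is that the specific Fenchel--Nielsen capping is close to yours, and that step is not free.
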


By applying the above statement repeatedly, we obtain the following:

\begin{lem}
\label{gluing1}
Let $S_1, \ldots , S_{N}$ be a collection of hyperbolic punctured spheres together with an admissible pairing of punctures 
$\{({p_{i_1}},{p_{j_1}}), \ldots  ({p_{i_K}},{p_{j_K}}) \}$.
For every $\eps>0$ there exists $\Delta>1$ such that the following holds.
Let $D_i\subset C_\Delta(p_i)$ be a topological disk around $p_i$. Let 
$$S_i'\equiv S_i\setminus \cup(D_j).$$
Then the surfaces $S_i$ can be combined into  a hyperbolic surface $S$ according to the above pairing of punctures so that:
\begin{itemize}
\item each $S_i'$ is  conformally embedded into $S$ with complementary annuli $A_{p_i,p_j}$;
\item the length of the core curve $\gamma_{i,j}$ of each $A_{p_i,p_j}$ is less than $\epsilon$;
\item denoting $\cup \hat S_i$ the decomposition of $S$ along $\Gamma=\cup \gamma_{i,j}$,
we have
$$d_{\cT_{S_i}}(S_i,\hat S_i)<\eps\text{ for all }i.$$
\end{itemize}
\end{lem}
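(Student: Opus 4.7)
The plan is to apply Lemma \ref{compn} inductively, one pair at a time. The admissibility hypothesis forces the gluing graph on $\{S_1,\ldots, S_N\}$ to be a tree, since otherwise the connected sum would carry positive genus instead of being a punctured sphere; thus $K=N-1$, and at each step two \emph{distinct} current surfaces are combined rather than performing a self-gluing.

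Fix $\eps>0$ and set $\eps'=\eps/(2K)$. Applying Lemma \ref{compn} with parameter $\eps'$ yields a threshold $\Delta_0>1$ valid for each individual pairwise combination. I would choose $\Delta$ substantially larger than $\Delta_0$, the precise size being dictated by the cumulative distortion estimate described below, and then, inside each $S_i$, select topological disks $D_{p_j}\subset C_\Delta(p_j)$ around every paired puncture.

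Order the pairings arbitrarily. At step $k$, apply Lemma \ref{compn} to the two current surfaces meeting at $(p_{i_k}, p_{j_k})$, producing a combined surface $W^{(k)}$ with a new core curve $\gamma_k$ of length less than $\eps'$, such that decomposing $W^{(k)}$ along $\gamma_k$ recovers the two input surfaces to within Teichm\"uller distance $\eps'$. Because each inserted annulus is thin (large modulus), Lemma \ref{hypest3} shows that the hyperbolic geometry on $W^{(k)}$ near any still-unpaired puncture is $\eps'$-close to what it was before the gluing, both in the sense of geodesic lengths and of Teichm\"uller distance on complementary subsurfaces. Combined with the Collar Lemma for cusps and Lemma \ref{haiss}(II), this implies that cusp neighborhoods of depth $\Delta$ in the original $S_i$ descend to cusp neighborhoods of depth at least $\Delta-O(\eps')$ per step in the current surface, so as long as $\Delta$ is chosen larger than $\Delta_0+O(K\eps')$, Lemma \ref{compn} continues to apply at each subsequent step; the conformal embedding $S_i'\hookrightarrow W^{(K)}$ and the annular complement are then inherited transitively from the individual steps.

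The main obstacle is the bookkeeping of cumulative distortion across the $K$ steps. The length of an earlier core curve $\gamma_m$ ($m<k$) changes under the $k$-th gluing by at most $\eps'$ (again via Lemma \ref{hypest3}, since the new cusp is far from $\gamma_m$ and the long annulus at the new gluing forces distances on the subsurface to drift only additively), and likewise the Teichm\"uller distance from each original $S_i$ to its embedded image drifts by at most $\eps'$ per step. Summing over the $K$ iterations bounds the total length and Teichm\"uller drift by $K\eps'<\eps$, yielding simultaneously $\mathrm{length}(\gamma_{i,j})<\eps$ and $d_{\cT_{S_i}}(S_i,\hat S_i)<\eps$ for all $i$. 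The uniformity of the per-step drift, crucial for closing the induction, is precisely what Lemma \ref{hypest3} is designed to supply.
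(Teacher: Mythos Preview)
Your proposal is correct and follows exactly the paper's approach: the paper's entire proof of Lemma~\ref{gluing1} is the single sentence ``By applying the above statement repeatedly, we obtain the following,'' referring to Lemma~\ref{compn}. Your inductive argument---the tree observation forcing $K=N-1$, the $\eps'=\eps/(2K)$ rescaling, and the cumulative-drift bookkeeping via Lemmas~\ref{hypest3} and~\ref{haiss}---supplies the details the paper leaves implicit.
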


\begin{figure}[ht]
\includegraphics[width=\textwidth]{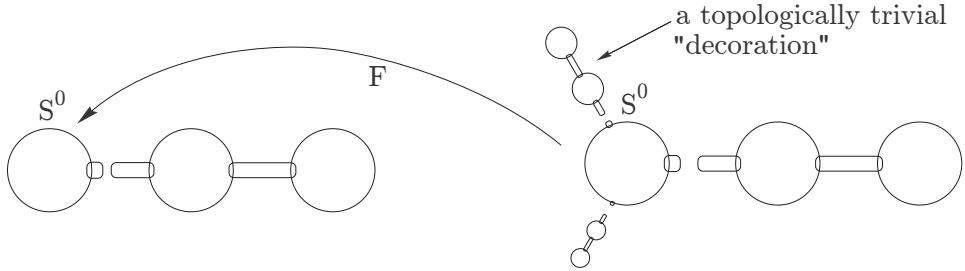}
\caption{\label{fig-decor}Pull-back of a decomposition}
\end{figure}

\paragraph*{\bf Idea of the proof of the Main Theorem.} 
When we pull-back a complex structure $\tau_0$ by $f$, only the geodesics of the canonical restriction become arbitrarily short. If we
restrict to a subsurface $S_1(j)$, then, with respect to the structure  
$$\tau_n\equiv \sigma^n_f\tau_0,$$ all simple closed curves have hyperbolic length
bounded from below independently of $n$ (\thmref{pil-bound}). However, if we restrict $\tau_0$ to a complex structure $\mu_0$ on 
$\tl S_0(j)$, and pull back by $\sigma_\cF$, the result may {\it a priori} be different. As illustrated in \figref{fig-decor}, 
the restrictions of $\tau_n$ to $S_0(j)$ will include ``decorations'': trivial preimages of the other subsurfaces. They are attached
by very thin tubes, and will not create much difference for the first few pull-backs. However, without some uniform distortion bound on the
change in the complex structure caused by restriction to the subsurface,
 we will not be able to control their impact on the length
of geodesics. 
Here Minsky's Theorem \ref{minsky thm} will be a key tool for us, as it produces a uniform bound on the change in complex structure 
caused by a decomposition.

\section{Proof of the Main Theorem}

Let us start by fixing $[\tau_0]\in\cT_f$, and setting $\tau_n\equiv\sigma_f\tau_0$. Let $E=E(\tau_0)$ be given by \thmref{pil-bound}. 
Further, let us consider the first return map 
$$\cF:\cup \tl S(j)\to \cup \tl S(j),$$
given by Pilgrim's decomposition.

Each $\tl S(j)$ in the domain of $\cF$ is a sphere with a finite number of punctures $P_j\subset \tl S(j)$,
the map 
$\cF:\tl S(j)\to \tl S(j)$ is a Thurston map (\ref{cycle-spheres}), and the postcritical set $P_{\cF}\cap {\tl S(j)}\subset P_j$.
By construction, the extra punctures $P_j\setminus P_\cF$ correspond to cuts in the Pilgrim's decomposition.

 Let $S^0 = \tl S(i)$ be as in the statement of the Main Theorem.
The {\it period} of $S^0$ is the number $N$ of the punctured spheres in the cycle (\ref{cycle-spheres}).
 By analogy with the previously adopted notation, we will 
denote $\cT_\cF$ the Teichm{\"u}ller space of $S^0$, and  $\cM_\cF$ the moduli space.
We further let $$\cT_\cF\overset{\tl p_\cF}{\longrightarrow} \widetilde{\cM_\cF}\overset{\bar p_\cF}{\longrightarrow}\cM_\cF$$ 
be as in \propref{finite cover}.

Finally, for the Fenchen-Nielsen decomposition (\ref{fnp}) we denote $\pi_{\cT_\cF}$ the forgetful map:
$$\pi_{\cT_\cF}:\cT_{S_{\Gamma_c}}\times \mathbb{H}_1 \times \ldots \times \mathbb{H}_k\longrightarrow\cT_{\cF}.$$
We first prove:
\begin{lem}
\label{invariant structure}
There exists a compact set $K\Subset\widetilde \cM_\cF$ 
 with the following property.
Consider the decomposition of $(S,\tau_n)$ given by
$$\mu_n\equiv\pi_{\cT_{\cF}}\circ \Pi_{\Gamma_c}([\tau_n])\in\cT_\cF.$$
Let $\tl\mu_n\equiv \tl p_\cF(\mu_n)\in\widetilde{\cM_\cF}.$
Then, 
for all $n\in\NN$, we have the projections $\tl \mu_n\in K$.
\end{lem}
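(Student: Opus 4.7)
The plan is to show that $\{p_\cF(\mu_n)\} = \{\bar p_\cF(\tl\mu_n)\}$ is pre-compact in $\cM_\cF$; since $\bar p_\cF: \widetilde{\cM_\cF} \to \cM_\cF$ is a finite covering, the pre-image of the closure of $\{p_\cF(\mu_n)\}$ is then a compact subset $K \Subset \widetilde{\cM_\cF}$ containing every $\tl\mu_n$. The key ingredients are the compactness of augmented Teichm\"uller space $\overline{\cT_S}$, Theorem~\ref{pil-bound} controlling which curves in $S$ can pinch along the orbit of $\sigma_f$, and the continuity of the Fenchel-Nielsen projection $\pi_{\cT_\cF} \circ \Pi_{\Gamma_c}$ up to the $\Gamma_c$-boundary stratum.

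By the compactness of $\overline{\cT_S}$, every subsequence of $\{\tau_n\}$ has a subsubsequence converging to some (possibly nodal) limit $\tau_\infty \in \overline{\cT_S}$. Its nodal structure is prescribed by the multicurve $\Gamma' \subset S$ consisting of the isotopy classes of simple closed curves whose $\tau_n$-lengths tend to zero along the subsubsequence. By Theorem~\ref{canonical1} the curves of $\Gamma_c$ do pinch, while by Theorem~\ref{pil-bound} every non-trivial simple closed curve $\gamma \notin \Gamma_c$ satisfies $\inf_n \ell_{\tau_n}([\gamma]) > E > 0$; hence $\Gamma' = \Gamma_c$, and $\tau_\infty$ lies on the $\Gamma_c$-boundary stratum of $\overline{\cT_S}$, which is the product $\cT_{S_{\Gamma_c}} = \prod_j \cT_{\tl S(j)}$.

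The map $\pi_{\cT_\cF} \circ \Pi_{\Gamma_c}: \cT_S \to \cT_\cF$ sends $\tau$ to the $\tl S(i)$-component of the Fenchel-Nielsen decomposition, and extends continuously to the $\Gamma_c$-boundary stratum by sending a nodal limit to its $\tl S(i)$-component. Theorem~\ref{minsky thm} provides the uniform additive distortion that makes this extension a genuine continuity: inside the thin part $\text{Thin}_\delta(S, \Gamma_c)$ the Fenchel-Nielsen coordinates of $\tl S(i)$ arising from the pants-decomposition curves of $\hat\Gamma \setminus \Gamma_c$ depend with bounded additive distortion on $\tau$, so they converge to those of $\tau_\infty$ as $\tau_n \to \tau_\infty$. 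Consequently the subsubsequence of $\mu_n = \pi_{\cT_\cF} \circ \Pi_{\Gamma_c}(\tau_n)$ converges in $\cT_\cF$ to the $\tl S(i)$-component of $\tau_\infty$, which is a genuine (non-degenerate) point of $\cT_\cF$.

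Every subsequence of $\{\mu_n\}$ thus has a convergent subsubsequence in $\cT_\cF$, so $\{p_\cF(\mu_n)\}$ is pre-compact in $\cM_\cF$, and $K := \bar p_\cF^{-1}(\overline{\{p_\cF(\mu_n)\}})$ is the desired compact set. The essential point is that the augmented Teichm\"uller space $\overline{\cT_S}$ only admits boundary strata indexed by multicurves on $S$, and therefore cannot accommodate further pinching within a component $\tl S(j)$ along a curve that is trivial or peripheral in $S$; this rigidity prevents the ``decorations'' mentioned in the Idea of Proof from driving $\mu_n$ to infinity. The hard part is to justify the continuity at the boundary stratum in the presence of these decorations, and it is precisely Theorem~\ref{minsky thm} that furnishes the uniform additive distortion bound which makes the extension of $\pi_{\cT_\cF} \circ \Pi_{\Gamma_c}$ to the $\Gamma_c$-stratum well-defined and continuous.
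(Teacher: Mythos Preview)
There is a genuine gap: the augmented Teichm\"uller space $\overline{\cT_S}$ is \emph{not} compact. What is compact is the Deligne--Mumford compactification $\overline{\cM_S}$ of moduli space; $\overline{\cT_S}$ is its (infinite-sheeted) branched cover by the mapping class group, and a sequence such as $\{T_\gamma^n(\tau_0)\}$ of iterated Dehn twists has no accumulation point in $\overline{\cT_S}$. Your opening sentence ``By the compactness of $\overline{\cT_S}$, every subsequence of $\{\tau_n\}$ has a subsubsequence converging\ldots'' therefore fails, and with it the extraction of a limit $\tau_\infty$ on the $\Gamma_c$-stratum. Without that limit point, nothing in your argument produces convergence of $\mu_n$ in $\cT_\cF$; in fact the paper only claims pre-compactness of $\{\tl\mu_n\}$ in $\widetilde{\cM_\cF}$, not of $\{\mu_n\}$ in $\cT_\cF$, and your stronger conclusion is not obviously true.

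The paper's proof avoids augmented Teichm\"uller space entirely and works directly with hyperbolic lengths. One first observes that any simple closed curve $\gamma\subset S_0(j)$ which is essential and non-peripheral in $\tl S(j)$ is automatically essential, non-peripheral, and not homotopic into $\Gamma_c$ when viewed in $S$ (each cap-puncture of $\tl S(j)$ replaces an essential $\Gamma_c$-curve, hence hides at least two points of $P_f$). Pilgrim's Theorem~\ref{pil-bound} then gives $\ell_{\tau_n}([\gamma])>E$. The Collar Lemma together with Lemma~\ref{hypest1} transfers this to a uniform lower bound on the length of $\gamma$ in the restricted/capped structure $\mu_n$ on $\tl S(j)$. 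Mumford compactness now yields pre-compactness of $\{p_\cF(\mu_n)\}$ in $\cM_\cF$, and the finite cover $\bar p_\cF$ lifts this to $\widetilde{\cM_\cF}$. Note that Minsky's Theorem~\ref{minsky thm} is not used here; it enters only later, in Proposition~\ref{continuity sigma}.
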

\begin{proof}
As before, let $\Gamma_c$ stand for the canonical obstruction of $f$. By \thmref{canonical1}, for every $\eps>0$ there exists 
$n_0$ such that for all $n>n_0$ we have
$$[\tau_n]\in \text{Thin}_\epsilon(S,\Gamma_c).$$
By the Collar Lemma, and \lemref{hypest1}, for every $\eps_1>0$ there exists $n_1\in\NN$ such that for every essential
curve $\gamma\subset S_0(j)$ 
$$||\ell_{\tau_n}([\gamma])-\ell_{\tau_n|S_0(j)}([\gamma])||<\eps_1.$$

By \thmref{pil-bound}, for every simple closed  curve $\gamma\subset \cup S_0(j)$ such that:
\begin{itemize}
\item[(a)] $[\gamma]$ is non-trivial in $S$, and
\item[(b)] $[\gamma]\notin\Gamma_c$,
\end{itemize}
we have $\ell_{\tau_n}([\gamma])>E.$
Thus, by Mumford Compactness Theorem, the projections 
$$ p_\cF\circ \pi_{\cT_{\cF}}\circ \Pi_{\Gamma_c}(\tau_n)=\tl p_\cF(\mu_n)$$ lie in
a compact subset $\hat K$ of $\cM_\cF$. Since $\bar p_\cF$ is a finite covering, the set
$$K\equiv (\bar p_\cF)^{-1}(\hat K)\supset\{\tl \mu_n\}$$ is a compact subset of $\widetilde{\cM}_\cF.$

\end{proof}

Let us continue using the notation 
$$\mu_n\equiv\pi_{\cT_{\cF}}\circ \Pi_{\Gamma_c}([\tau_n])\in\cT_\cF.$$
We prove the following:
\begin{prop}
\label{continuity sigma}Let $N$ be the period of $S^0$.
There exists $\eps>0$ such that the following holds. For every $m\in\NN$ there exists 
$k_0=k_0(m)$ such that for all $k\geq k_0$ the distance
$$d_{k,m}\equiv d_{\cT_\cF}([\sigma^m_{\cF}(\mu_{k})],[\pi_{\cT_\cF}\circ \Pi_{\Gamma_c}(\sigma^m_{f^N}(\tau_{k}))])<\eps.$$
\end{prop}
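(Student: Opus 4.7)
The argument breaks into two steps: a one-step asymptotic commutation estimate, followed by a soft telescoping bound that uses only the $1$-Lipschitz property of $\sigma_\cF$.

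\textbf{Step 1 (one-step estimate).} The key claim to establish is
$$(\star)\qquad\text{for every }\delta>0\text{ there exists }n_1\text{ so that }d_{\cT_\cF}(\sigma_\cF(\mu_n),\mu_{n+N})<\delta\ \text{for all }n\geq n_1.$$
Writing $T:=\pi_{\cT_\cF}\circ\Pi_{\Gamma_c}$ and using $\sigma_f^N=\sigma_{f^N}$, the statement is that $\sigma_\cF\circ T\approx T\circ\sigma_f^N$ asymptotically along the orbit. By \thmref{canonical1}, $\tau_n\in\text{Thin}_{\eps_0}(S,\Gamma_c)$ for $n$ large with $\eps_0$ as small as desired, so Minsky's \thmref{minsky thm} supplies a uniform additive distortion bound for $\Pi_{\Gamma_c}$ on this thin part. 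The pullback by $f^N$ therefore decomposes, up to an additive Minsky error, into its actions on the $\cT_{S_{\Gamma_c}}$ and $\HH^k$ factors separately; on the $i$-th factor of $\cT_{S_{\Gamma_c}}$ (the one corresponding to $S^0$) the action is, by the construction of Pilgrim's decomposition, precisely $\sigma_\cF$, modulo two local modifications: (i) the ``decorations,'' i.e.\ trivial $f^N$-preimages of other subsurfaces attached to $S^0$ through necks whose $\tau_n$-lengths tend to $0$, and (ii) the replacement of these necks by punctured caps in forming $S^0$. Modification (i) is supported in cusp neighborhoods $C_\Delta(\kappa)$ with $\Delta\to\infty$, so by \lemref{haiss}(II) its effect in $\cT_\cF$ tends to $0$; modification (ii) is controlled by \lemref{hypest1}, \lemref{hypest3} and \lemref{gluing1}. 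Combining these estimates yields $(\star)$.

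\textbf{Step 2 (telescoping).} Set $\alpha_j:=\sigma_\cF^{m-j}(\mu_{k+jN})$ for $j=0,\ldots,m$, so that $\alpha_0=\sigma_\cF^m(\mu_k)$ and $\alpha_m=\mu_{k+mN}$. Since $\alpha_j=\sigma_\cF^{m-j-1}(\sigma_\cF(\mu_{k+jN}))$, $\alpha_{j+1}=\sigma_\cF^{m-j-1}(\mu_{k+(j+1)N})$, and every iterate of $\sigma_\cF$ is $1$-Lipschitz on $\cT_\cF$,
$$d_{\cT_\cF}(\alpha_j,\alpha_{j+1})\le d_{\cT_\cF}(\sigma_\cF(\mu_{k+jN}),\mu_{k+(j+1)N}).$$
The triangle inequality then gives
$$d_{k,m}\le\sum_{j=0}^{m-1}d_{\cT_\cF}(\sigma_\cF(\mu_{k+jN}),\mu_{k+(j+1)N}).$$
Choose any $\eps>0$ (to serve as the universal constant of the proposition). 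Given $m\in\NN$, apply $(\star)$ with $\delta:=\eps/(m+1)$ to obtain $n_1$, and set $k_0(m):=n_1$. For $k\geq k_0(m)$ every summand is at most $\eps/(m+1)$, so $d_{k,m}<\eps$, as claimed.

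\textbf{Main obstacle.} The real work is concentrated in Step 1: one must convert the geometric smallness of the decorations and capping perturbations \emph{on $S$} into smallness of Teichm\"uller distance \emph{on $\cT_\cF$}. This conversion is exactly what Minsky's \thmref{minsky thm} provides through its uniform additive quasi-isometry bound for $\Pi_{\Gamma_c}$ on the thin part; without that bound, cutting and re-gluing a Riemann surface could a priori produce an unbounded change of complex structure. The telescoping of Step 2 is soft and, crucially, requires no strict contraction of $\sigma_\cF$, so the iterates $\sigma_\cF^j(\mu_k)$ are not required to remain in the compact set $K$ of \lemref{invariant structure}.
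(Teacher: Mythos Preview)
Your telescoping in Step 2 is fine, but Step 1 has a real gap, and without $(\star)$ holding for \emph{arbitrary} $\delta$ the whole strategy collapses: if the one-step error is merely bounded by a constant $C$, your telescope yields $d_{k,m}\le mC$, which is not uniform in $m$.

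The problem is your use of Minsky's theorem. Minsky gives a \emph{fixed} additive distortion constant for $\Pi_{\Gamma_c}$ on the thin part; it says nothing about the pull-back map $\sigma_{f^N}$ ``decomposing into factor actions.'' The conjugated map $\Pi_{\Gamma_c}\circ\sigma_{f^N}\circ\Pi_{\Gamma_c}^{-1}$ need not be close to a product map, and even if you grant that, the ``additive Minsky error'' you invoke is a non-vanishing constant $C$, so at the end of Step 1 you can only conclude $d_{\cT_\cF}(\sigma_\cF(\mu_n),\mu_{n+N})\le C+o(1)$, not $<\delta$ for arbitrary $\delta$. Your appeal to \lemref{haiss}(II) controls the modifications (i),(ii) in $\cT_\cF$, but that estimate is being \emph{added} to the Minsky constant, not replacing it. To actually get $(\star)$ you would have to compare the two structures directly on $S^0$ without passing through $\cT_S$ via Minsky, which means relating the abstract Fenchel--Nielsen projection $\mu_n$ to a concrete geometric cut-and-cap structure well enough to localize the discrepancy in shrinking cusp neighborhoods; this is not done.

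The paper avoids this difficulty by \emph{not} telescoping. For each $m$ it builds, via \lemref{compn}, a single surface $(S,\lambda_{k,m})$ that agrees with $(S,\sigma_{f^N}^m\tau_k)$ outside $S_0(j)$ and carries $\sigma_\cF^m(\mu_k)$ on the $S^0$ piece. Then Minsky is applied once: the Teichm\"uller distance in $\cT_S$ between $\lambda_{k,m}$ and $\sigma_{f^N}^m\tau_k$ equals $d_{k,m}$ up to the fixed Minsky constant, and that $\cT_S$-distance tends to $0$ as $k\to\infty$ by \lemref{haiss}. The Minsky constant thus becomes the uniform $\eps$ of the proposition, independent of $m$. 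Your route would be genuinely different and arguably more elementary \emph{if} you could prove $(\star)$ without the additive loss, but as written the argument does not accomplish this.
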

\begin{proof}
Recall that the surface $S^0$ is obtained by capping the surface with holes $S_1(j)\subset S_0(j)$.
Let us {\it combine} the truncated surface 
$$S'\equiv (S\setminus S_0(j),\sigma^m_{f^N}(\tau_{k}))$$
with the surface
$$(S^0,\sigma^m_\cF(\mu_k)).$$
Using \lemref{compn} we can combine the two surfaces into a surface $(S,\lambda_{k,m})$ of the same topological type as $S$, with the only difference
of the complex structure on the component $S_0(j)$, which is now given by $\sigma^m_{\cF}(\mu_k)$. 

 By Minsky's Theorem, for large enough $k$, 
up to a bounded additive error, the Teichm{\"u}ller distance between $(S,\lambda_{k,m})$ and $(S,\sigma^m_{f^N}(\tau_{k}))$ is given by the
distance $d_{k,m}$. 

By the Collar Lemma and 
 \lemref{haiss}, 
$$d_{\cT_f}([(S,\lambda_{k,m})],[(S,\sigma^m_{f^N}(\tau_{k}))])\underset{k\to\infty}{\longrightarrow}0.$$
By Minsky's Theorem \ref{minsky thm},  and \lemref{invariant structure}, 
$d_{k,m}$ is uniformly bounded for large values of $k$.

\end{proof}
Fix $\eps>0$ as in \propref{continuity sigma}, and let $K\Subset \widetilde{\cM_\cF}$ be as in \lemref{invariant structure}.
we can select a subsequence of the natural numbers
$\{ n_j\}$ with the property
$$\tl \mu_{n_j}\to\tl\alpha\in\widetilde{\cM_\cF}.$$
We will endow $\cM_\cF$ and $\widetilde{\cM_\cF}$ with the distance induced by the Teichm{\"u}ller metric on $\cT_\cF$.
Let $\bar\alpha=\bar p_\cF(\tl\alpha)\in\cM_\cF$, and consider a ball $B$ of radius $3\eps$ around $\bar\alpha$. Denote
$$\tl B\equiv (\bar p_\cF)^{-1}(B)\Subset  \widetilde{\cM_\cF}.$$
By \propref{finite cover}, there exists $s\in(0,1)$ such that for every $[\tau]\in \tl p_\cF^{-1}(\tl B)$ the Teichm{\"u}ller norm
of $D_{[\tau]}\sigma_\cF$ is bounded by $s$. 
Hence, there exists $j_0\in\NN$ such that for all $j\geq j_0$ the following holds. 
Let $\tl \zeta\in\tl B$ and let $[\zeta]\in \tl p_\cF^{-1}(\tl\zeta)$ realize the minimum of the Teichm{\"u}ller
distance from $[\mu_{n_j}].$ Let $m\in\NN$ be such that $n_j+m\geq n_{j+k}$. The
distance
$$d_{\cT_\cF}(\sigma_\cF^m([\zeta]),[\mu_{n_j+m}])<s^k d_{\cT_\cF}([\zeta],[\mu_{n_j}]).$$
Let $j$ be such that $\tl \mu_{n_j+k}$ is $\frac{s}{3}$-close to $\tl \alpha$ for all $k\geq 0$. Let us choose $k$ so that for
 $m=n_{j+k}-n_j$ we have $s^m<1/3$. By \propref{continuity sigma}, there exists a branch of 
$\tl\sigma_{\cF^m}$ which maps $\tl B$ compactly inside $B$. Hence, there exists $[\lambda]\in p_\cF^{-1}(B)$ such that
$[\lambda]$ and $\sigma_{\cF^m}([\lambda])$ lie over the same point in $B$. Thus, $\cF^m$ is Thurston equivalent to a rational
map. By Pilgrim's Theorem \ref{canonical1} the map $\cF$ is not obstructed, and the proof of the Main Theorem is completed.

\newpage
\begin{bibdiv}
\begin{biblist}

\bib{BBY}{article}{
title={Thurston equivalence is decidable},
author={S. Bonnot},
author={M. Braverman},
author={M. Yampolsky},
journal={e-print Arxiv.org},
volume={1009.5713}
}

\bib{Buser}{book} {
    AUTHOR = {Buser, P.},
     TITLE = {Geometry and spectra of compact {R}iemann surfaces},
    SERIES = {Progress in Mathematics},
    VOLUME = {106},
 PUBLISHER = {Birkh\"auser Boston Inc.},
   ADDRESS = {Boston, MA},
      YEAR = {1992},
     PAGES = {xiv+454},
      ISBN = {0-8176-3406-1},
   MRCLASS = {58G25 (30F99)},
  MRNUMBER = {1183224 (93g:58149)},
MRREVIEWER = {Robert Brooks},
}

\bib{DH}{article}{
title={A proof of Thurston's topological characterization of rational functions},
author={Douady, A.},
author={Hubbard, J.H.},
journal={Acta Math.},
volume={171},
date={1993},
pages={263--297}
}

\bib{Hai}{article}{
    AUTHOR = {Ha{\"{\i}}ssinsky, P.},
     TITLE = {D\'eformation localis\'ee de surfaces de {R}iemann},
   JOURNAL = {Publ. Mat.},
  FJOURNAL = {Publicacions Matem\`atiques},
    VOLUME = {49},
      YEAR = {2005},
    NUMBER = {1},
     PAGES = {249--255},
      ISSN = {0214-1493},
   MRCLASS = {30F60 (32G15)},
  MRNUMBER = {2140209 (2006a:30047)},
MRREVIEWER = {Marco Boggi},
}

\bib{McM}{book}{
    AUTHOR = {McMullen, C.T.},
     TITLE = {Complex dynamics and renormalization},
    SERIES = {Annals of Math. Studies},
    VOLUME = {135},
 PUBLISHER = {Princeton University Press},
   ADDRESS = {Princeton, NJ},
      YEAR = {1994},
     PAGES = {x+214},
}

\bib{Minda}{article}{
title={Estimates for the hyperbolic metric},
author={Minda, D.},
journal={Kodai Math. J.},
volume={8},
date={1985},
pages={249--258}
}

\bib{Minsky}{article}{
author={Minsky, Y.},
title={Extremal length estimates and product regions in Teichmuller space},
journal={Duke Math. J.},
volume={83},
pages={249--286},
date={1996},
}

\bib{Pil1}{article}{
    AUTHOR = {Pilgrim, K.},
     TITLE = {Canonical {T}hurston obstructions},
   JOURNAL = {Adv. Math.},
    VOLUME = {158},
      YEAR = {2001},
    NUMBER = {2},
     PAGES = {154--168}
}

\bib{Pil2}{book}{
title={Combinations of complex dynamical systems},
author={Pilgrim, K.},
publisher={Springer},
series={Lecture Notes in Mathematics},
volume={1827},
date={2003},
}

\bib{sel}{article}{
title={Thurston's pullback map on the augmented {T}eichm{\"u}ller space and applications},
author={Selinger, N.},
date={October 11, 2010},
journal={e-print at Arxiv.org},
volume={1010.1690},
}

\end{biblist}
\end{bibdiv}

\end{document}